\newtheorem{theorem}{Theorem}
\newtheorem{lemma}[theorem]{Lemma}
\newtheorem{remark}[theorem]{Remark}
\newcommand{\Ba}{\partial_t^{1-\alpha}}
\newcommand{\R}{\mathbb{R}}
\newcommand{\fInt}{\mathcal{I}}
\newcommand{\iprod}[1]{\left\langle#1\right\rangle}
\newcommand{\W}{\mathcal W}
\newcommand{\matM}{{[\bf{M}]}}
\newcommand{\matG}{{[{\bf G}^n]}}
\newcommand{\matS}{{[{\bf S}^n]}}
\newcommand{\mvec}[1]{{{#1}}}
\newcommand{\vnabla}{\nabla}
\newcommand{\vF}{{\mvec{F}}}
\newcommand{\vpsi}{{{\psi}}}
\begin{document}

\title{A second-order accurate numerical scheme for a time-fractional Fokker-Planck equation}
%

\author{K. Mustapha\thanks{Department of Mathematics and Statistics,
King Fahd University of Petroleum and Minerals,
Dhahran, 31261, Saudi Arabia}, O. M. Knio\thanks{Computer, Electrical, Mathermatical Sciences and Engineering Division, King Abdullah University of Science and Technology, Thuwal 23955, Saudi Arabia}, O. P. Le Ma\^{\i}tre\thanks{CNRS, LIMSI, Universit\'e Paris-Scalay, Campus Universitaire - BP 133, F-91403 Orsay, France}}

\maketitle

\begin{abstract}
{A time-stepping $L1$  scheme for solving a  time fractional Fokker-Planck equation of order $\alpha \in (0, 1)$,  with a general driving force, is investigated.   A stability bound for the semi-discrete solution is obtained for $\alpha\in(1/2,1)$ {via a  novel and concise approach.} Our stability estimate is $\alpha$-robust in the sense that it remains valid in the limiting case where $\alpha$ approaches $1$ (when the model reduces to the classical Fokker-Planck equation), a limit that presents practical importance. Concerning the error analysis, we obtain an optimal second-order accurate estimate for  $\alpha\in(1/2,1)$. A time-graded mesh is used to compensate for the singular behavior of the continuous solution near the origin. The $L1$ scheme is associated with a standard spatial Galerkin finite element discretization to numerically support our theoretical contributions. We employ the resulting fully-discrete computable numerical scheme to perform some numerical tests. These tests suggest that the imposed time-graded meshes assumption could be further relaxed, and we observe second-order accuracy even for the case $\alpha\in(0,1/2]$, that is, outside the range covered by the theory.}{Fractional Fokker-Planck, $L1$ approximations, finite elements, stability and error analysis, graded meshes}
\end{abstract}

\section{Introduction}
We consider the following time-fractional Fokker-Planck equation (Angstmann et al. (2015))
\begin{equation}\label{eq: ibvp}
\partial_t u(x,t)
	+\vnabla \cdot \Big( \partial_t^{1-\alpha}\kappa_\alpha \vnabla  u+ \vF \partial_t^{1-\alpha} u\Big) (x,t)	=f(x,t),
	\quad\mbox{for } x\in\Omega \mbox{ and } 0<t<T,
\end{equation}
where $\Omega$ is a convex polyhedral domain in~$\R^d$ ($d\ge1$).
In~\eqref{eq: ibvp}, the diffusivity coefficient $\kappa_\alpha$, function of $x$, is assumed uniformly bounded with a lower bound $\kappa_{\min} >0$. The driving force $\vF:=(F_1,F_2,\ldots,F_d)$ is a vector function of $x$ and $t$, while $f$ is a source term. The functions $\kappa_\alpha$, $\vF$,  and $f$ are assumed to be sufficiently regular functions on their respective domains.   
Further,  $\partial_t=\partial/\partial t$ is the classical time partial  derivative,   and the time-fractional derivative $\partial^{1-\alpha} /\partial t$  is taken in the Riemann--Liouville sense, that is
\[
\partial_t^{1-\alpha} v=\partial_t\fInt^\alpha v, ~~
\fInt^\alpha v(t)
	= \int_0^t\omega_\alpha(t-s)v(s)\,ds \mbox{ with }
	\omega_\alpha(t)=\frac{t^{\alpha-1}}{\Gamma(\alpha)},~~ {\rm with}~~0<\alpha<1.
\]
The problem is completed with the initial condition $u(x,0)=u_0(x)$, and homogeneous Dirichlet boundary condition
$u(x,t)=0$ for $x\in\partial\Omega$ and $0<t<T.$  The well-posedness and regularity properties of problem~\eqref{eq: ibvp} and more general models were recently investigated in (Le et al. (2019), McLean et al. (2019) and (2020)). 
\begin{remark}
 Assuming $u$ satisfies the regularity assumption in  \eqref{eq: regularity2}  below, the time-stepping stability property and error bound established in this paper for the homogeneous Dirichlet boundary condition remain valid in the case of a zero-flux boundary condition,
\begin{equation}\label{eq: Neuman bc}
\partial_t^{1-\alpha}\left( \kappa_\alpha\frac{\partial u}{\partial n} \right)
	-(\mvec{F}\cdot\mvec{n})\,\partial_t^{1-\alpha}u=0
	\quad \mbox{ for } x\in\partial\Omega\mbox{ and } 0<t<T,
\end{equation}
where $\mvec{n}$ denotes the outward unit normal to~$\Omega$.  In this case, one should substitute the Sobolev space $H^1_0(\Omega)$ with $H^1(\Omega)$ throughout the paper. Here, the rate of change of the total mass~$\int_\Omega  u(\cdot,t)$ is equal to  $\int_\Omega  f(\cdot,t)$ and thus, the total mass is  conserved in the absence of the external source $f.$  
\end{remark}

{
The particular case of $\vF$ independent of~$t$ and $f \equiv 0$  allows to reformulate the  fractional Fokker-Planck equation by applying $\mathcal{I}^{1-\alpha}$ on both side of the governing equation~\eqref{eq: ibvp} to obtain 
\begin{equation}\label{eq: Caputo}
{}^{\mathrm{C}}\partial_t^\alpha u-\vnabla\cdot\bigl(\kappa\vnabla u-\vF u\bigr)
    =0,
\end{equation}
where ${}^\mathrm{C}\partial_t^\alpha u=\mathcal{I}^{1-\alpha}\partial_tu$ is the Caputo fractional derivative of order~$\alpha$.  Numerous authors have studied the numerical solution of~\eqref{eq: Caputo}, mostly for a 1D spatial domain~$\Omega=(0,L)$ and with $f\equiv0$. For example,  Deng (2007) considered the method of lines, Jiang and  Xu (2019) proposed a finite volume method, Saadatmandi et al. (2012) investigated a collocation method based on time-shifted Legendre polynomials and sinc functions in space,  Yang et  al. (2018) proposed a spectral collocation method, and Duong and Jin (2020) a Wasserstein gradient flow formulation. For the case of  $\vF = \mvec{0}$ in~\eqref{eq: Caputo}, various time-stepping methods were investigated, including the popular $L1$ schemes, see for instance~(Jin et al. (2018), Karaa and Pani (2020), Kopteva (2019), Liao et al. (2019), Stynes et al. (2017), Wang et al. (2018), Yan et al. (2018)). For more details, see the recent survey by Stynes (2021) and related references therein. 
}

{
Space-time-dependent driving forces $\vF$ make much more challenging the analysis of numerical schemes for problem~\eqref{eq: ibvp}, especially the time-discretization stability and error. 
For the spatial discretization error, a Galerkin finite element method was previously studied in (Le et al. (2016), Huang et al. (2020)) and the analyses assumed sufficiently regular data and a fractional exponent $\alpha \in (1/2,1)$. An error analysis for non-smooth data and $0<\alpha<1$ was presented in (Le et al. (2018)), and more recently in (McLean and Mustapha (2021)) where a uniform ($\alpha$-robust) stability bound was proved. Concerning the time-discretization of~\eqref{eq: ibvp}, a semi-discrete backward Euler time-stepping method was proposed in (Le et al. (2016)). Therein, some new ideas were introduced to show an $O(\tau^\alpha)$-rate for sufficiently regular data $u_0$, $\vF$, $f$, uniform time-mesh with a step-size $\tau$, and a fractional exponent $\alpha \in (1/2,1)$. Similar convergence results were proved later under analogous assumptions for a slightly modified scheme (Huang et al. (2020)). We are not aware of other works on the time discretization of~\eqref{eq: ibvp}. 
}

{
The present work is the first paper to develop and analyze a second-order  accurate time-stepping method for solving~\eqref{eq: ibvp} using $L1$ approximations. Since  the continuous solution $u$ of~\eqref{eq: ibvp} has singularity near $t=0$, a time-graded mesh is employed to improve the convergence rate and achieve the optimal order of accuracy. Stability and optimal convergence analysis are carried out for $1/2<\alpha < 1$, which is the range of practical interest for the fractional exponent (diffusion and sub-diffusion). The results extend to $0<\alpha<1$ in the case of zero initial data ($u_0=0$). A similar $L1$ time-stepping scheme was recently analyzed in (Mustapha (2020)) in the case of zero driving force ($\vF=\mvec{0}$). Unfortunately, the error analysis therein  can not be extended to nonzero time-space driving forces $\vF$ (precisely, the proof of the main error result (Lemma 3.1, Mustapha (2020)) is problematic). Furthermore, even for the case of zero $\vF$, the stability proof of the numerical scheme is also missing. 
To derive the stability bound and prove the optimal convergence rate, the present work involves  original technical contributions. We reformulate the numerical scheme in the convenient compact form~\eqref{eq: scheme W} leading to the weak formulation in~\eqref{eq: compact}. Applying the operator $\fInt^\alpha$ to~\eqref{eq: compact} we obtain a new weak formulation. Then, proceeding with carefully selected test functions in these weak formulations, we derive several new results using some properties of the fractional integral (see Lemmas~\ref{lem: pointwise bound}, \ref{lem: I nu I mu} and~\ref{lem: y(t)}). In addition, we prove in Lemma~\ref{lem: Gronwall} a discrete version of a weakly singular Gronwall's inequality for a graded time-mesh. The achieved stability results are exploited to carry the error analysis for both uniform and graded time meshes.}
  
The organization of the paper is as follows. In Section~\ref{sec: scheme}, we define our semi-discrete $L1$ time-stepping scheme. We also state and show some preliminary results that will be used later in our stability and error analyses.  
In Section~\ref{sec: stability}, we propose a novel approach to prove a stability bound of the numerical solution when $1/2<\alpha < 1$, see Theorem~\ref{thm: stability}. As mentioned above, this stability bound remains valid for $0<\alpha < 1$ in the case of zero initial data $u_0=0$.  Section~\ref{sec: fractional interplation} focuses on estimating the errors resulting from applying the fractional derivative to the difference between a function and its piecewise linear polynomial interpolant  over uniform and non-uniform time meshes, where Lemmas 3.2 and 3.4 of (Mustapha (2020)) are used. These error estimates are used later in the convergence analysis of the $L1$ scheme.   Section~\ref{sec: error analysis} presents our error analysis. We assumed that the continuous solution $u$ of problem~\eqref{eq: ibvp} satisfies the following regularity property:   
for $0<\alpha<1,$ 
\begin{equation}\label{eq: regularity2}
\|
\partial_t^{1-\alpha} u(t)\|+t\|\partial_t^{2-\alpha} u(t)\|\le M\,t^{\alpha-1},\quad 
 \|u''(t)\|_1+t\|u'''(t)\|_1
  \le M t^{\sigma+\alpha/2-2},
\end{equation} 
 for some positive constants  $M$ and $\sigma$ with  $0<\sigma\le  \alpha$, which is  the likely situation for reasonable regular data. Above, the prime ($'$) denotes the time partial derivative and $\|\cdot\|_\ell$ is the norm on the usual Sobolev space $H^\ell(\Omega)$ which reduces to the $L_2(\Omega)$-norm, simply denoted $\|\cdot\|$, when $\ell=0$.  As an example,  when $t^j \|f^{(j)}(t)\|_1\le c\, t^{\frac{3\alpha}{4} -1}$ for $0\le j\le 3$  ($f^{(j)}$ denotes the $j^{th}$ time partial derivative of $f$), and $u_0\in H^1_0(\Omega)\cap H^{2+q}(\Omega)$ for some $0<q<0.5$, the second assumption in~\eqref{eq: regularity2} is true for $\sigma=q\,\frac{\alpha}{2}$. However, it is sufficient to assume $u_0\in L^2(\Omega)$ to ensure that the first assumption holds; for more details on the regularity results, see~(McLean (2010), McLean et al. (2020))  for zero or nonzero $\vF$, respectively. 

It is worth mentioning that for non-smooth data $u_0$ and $f$, $\sigma$ is expected to be $\le 0$. Investigating these situations is beyond the scope of the current work and will be a topic of future research. The main result in Theorem~\ref{thm: time convergence} concerns  the sub-optimal $O(\tau^{\sigma+\alpha/2})$-rate of convergence over a uniform time mesh, and  optimal $O(\tau^2)$-rate of convergence over time-graded meshes with a mesh exponent $\gamma > 2/(\sigma+\alpha/2)$ (see~\eqref{eq: time mesh} for the definition of the time-graded mesh). In both cases, $1/2< \alpha< 1$  is assumed.    

In Section~\ref{sec: numerical results}, we illustrate the theoretical convergence results numerically on a sample of test problems.  For this purpose, we combine the $L1$ time-stepping scheme with a standard continuous (linear) Galerkin finite element method for the spatial discretization, then defining a fully-discrete scheme. For such an approach, one can check that the stability estimate in Theorem~\ref{thm: stability} remains valid with $u_{0h}$ in place $u_0$. The fully-discrete $L1$ finite element scheme is briefly introduced in Section~\ref{sec: fully22}.  We present several numerical results for $0<\alpha<1$, that is, not only in the range $(1/2,1)$ as theoretically  necessary.  The numerical results suggest $O(\tau^{\min\{\gamma(\sigma+\alpha),2\}})$-rates of convergence for $\gamma \ge 1.$ Hence, optimal convergence rates can be achieved even for time mesh exponent $\gamma \ge  2/(\sigma+\alpha)$. This finding means that the  time-graded mesh  constrain  can be relaxed numerically,   and the achieved convergence rate on the uniform mesh can be further improved.

\section{$L1$-time stepping scheme and preliminary results}\label{sec: scheme}

This section is devoted to our semi-discrete time-stepping $L1$ numerical scheme for solving the model problem~\eqref{eq: ibvp}. We use a time-graded mesh with nodes $t_i$ defined as follows. Let $\gamma\ge 1$ and denote $N$ the number of time-intervals. We set   
\begin{equation} \label{eq: time mesh} 
  \tau=T^{1/\gamma}/N, \quad t_i=(i\,\tau)^\gamma,\quad \mbox{for } 0\le i\le N.
\end{equation}
Such meshes are used in different contexts, including Volterra integral equations and super- and sub-diffusion models (see for instance~ (Brunner et al. (1999), Chandler and Graham (1988), McLean and Mustapha (2007), Mustapha (2015), Stynes et al. (2017)), to compensate for singular behaviour in derivatives. For $1\le n\le N$, we denote $\tau_n=t_n-t_{n-1}$ the length of the $n$-th subinterval~$I_n=(t_{n-1},t_n)$. The time-graded mesh has the following properties~ (McLean and Mustapha (2007)): for $n\ge 2$, 
\begin{equation}\label{eq: mesh property 1}
t_n\le 2^\gamma t_{n-1},\quad \gamma \tau t_{n-1}^{1-1/\gamma}\le \tau_n\le \gamma \tau t_n^{1-1/\gamma},\quad \tau_n-\tau_{n-1} \le c_\gamma \tau^2 \,t_n^{1-2/\gamma},
\end{equation}
where $c_\gamma$ is a non-negative constant depending on $\gamma$ only, which is zero for $\gamma=1$. 

To define our time-stepping numerical scheme, we integrate problem~\eqref{eq: ibvp} over the time interval $I_n$, 
\begin{equation}\label{eq: ibvp int}
u(t_n)-u(t_{n-1}) -\int_{t_{n-1}}^{t_n}[\vnabla \cdot(\partial_t^{1-\alpha}(\kappa_\alpha  \vnabla u)-\vF\partial_t^{1-\alpha} u)](t)\,dt= \int_{t_{n-1}}^{t_n} f(t)\,dt\,.
\end{equation}

We define our $L1$ approximate solution  $U(t) \approx u(t)$ to be continuous and piecewise linear polynomial in time over each closed subinterval $[t_{n-1},t_n]$, that is 
\[
	U(t)=\frac{(t-t_{n-1})}{\tau_n} U(t_n)+\frac{(t_n-t)}{\tau_n} U(t_{n-1})\,.
\] 

 Motivated by \eqref{eq: ibvp int} and noticing that $U(t_n)-U(t_{n-1})= U'(t)$ for $t\in I_n$, the approximate solution satisfies
\begin{equation} \label{fully}
  U'(t)- \frac{1}{\tau_n}\int_{t_{n-1}}^{t_n}[\vnabla \cdot(\partial_s^{1-\alpha}(\kappa_\alpha  \vnabla U)-\vF^{n-\frac{1}{2}}\partial_s^{1-\alpha} U)]\,ds= \frac{1}{\tau_n}\int_{t_{n-1}}^{t_n} f\,ds,~~{\rm for}~~t \in I_n, 
\end{equation}
for $ 1\le n\le N$, with $U(0)=u_0$ and $\vF^{n-\frac{1}{2}}=\frac{\vF(t_{n-1})+\vF(t_n)}{2}.$  For the case of non-smooth initial data $u_0$, the scheme above can be modified by replacing $U(t)$ with a (time) constant function  $U(t_1)$ for $t \in I_1.$  Noting that, in the limiting case $\alpha \rightarrow 1$, problem~\eqref{eq: ibvp} reduces to the classical Fokker-Planck equation with external source and time-space dependent driving force; the time-stepping scheme then corresponds to the second-order accurate Crank-Nicolson time-stepping scheme.

In the rest of the section we present four lemmas which provide inequalities for subsequent stability and error analyses.
The first lemma (Lemma 2.3, McLean et al. (2019)) will later enable us to establish pointwise estimates for certain terms, where $\iprod{\cdot,\cdot}$ denotes the $L_2$-inner product on the spatial domain $\Omega$.
\begin{lemma}\label{lem: pointwise bound}
  Let $0\le \mu <\nu \le 1$. If the function $\chi: [0,T] \to L_2(\Omega)$ is continuous with $\chi(0)=0$, and if its restriction to $(0,T]$ is piecewise differentiable with $\chi'(t)=Ct^{-\mu}$ for $0<t\le T$,  then
  \[
    \|\chi(t)\|^2\le 2\omega_{2-\nu}(t)\int_0^{t}\iprod{\fInt^\nu\chi'(t),\chi'(t)}\,dt\,.
  \]
\end{lemma}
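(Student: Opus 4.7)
Since $\chi(0)=0$, the fundamental theorem of calculus gives $\chi(t)=\fInt^1\chi'(t)$, and the semigroup property $\fInt^1=\fInt^{1-\nu}\circ\fInt^\nu$ (valid for $0\le\nu\le1$) then yields the convolution representation
\[
\chi(t)=\int_0^t\omega_{1-\nu}(t-s)\,(\fInt^\nu\chi')(s)\,ds.
\]
This identity is the starting point for a three-stage reduction: extract the prefactor $\omega_{2-\nu}(t)$ via a weighted Cauchy--Schwarz, and then reduce the remaining weighted $L^2$-quantity to the target bilinear form $\iprod{\fInt^\nu\chi',\chi'}$.

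Pairing the representation above with $\chi(t)$, applying Cauchy--Schwarz pointwise on the $L_2(\Omega)$-inner product, and then Cauchy--Schwarz in the weighted space $L^2\bigl((0,t);\omega_{1-\nu}(t-s)\,ds\bigr)$, one obtains
\[
\|\chi(t)\|^2\le\omega_{2-\nu}(t)\int_0^t\omega_{1-\nu}(t-s)\,\|(\fInt^\nu\chi')(s)\|^2\,ds,
\]
since the total weight mass is exactly $\int_0^t\omega_{1-\nu}(t-s)\,ds=\omega_{2-\nu}(t)$. These two stages amount to routine bookkeeping.

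The technically crucial step is the coercivity-type reduction
\[
\int_0^t\omega_{1-\nu}(t-s)\,\|(\fInt^\nu\chi')(s)\|^2\,ds\le 2\int_0^t\iprod{(\fInt^\nu\chi')(s),\chi'(s)}\,ds.
\]
Under the growth assumption $\|\chi'(s)\|\le C\,s^{-\mu}$ with $\mu<\nu$, one has $(\fInt^\nu\chi')(0)=0$, so $\|(\fInt^\nu\chi')(s)\|^2=2\int_0^s\iprod{\partial_r(\fInt^\nu\chi')(r),(\fInt^\nu\chi')(r)}\,dr$. Substituting this into the left-hand side, swapping the order of the $s$- and $r$-integrals, and invoking the Beta-function identity $\omega_{1-\nu}\ast\omega_\nu=\omega_1\equiv1$ to collapse the resulting inner $s$-integral should produce the desired pairing, with the factor $2$ inherited from the $\tfrac{d}{ds}\|\cdot\|^2$ identity. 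I expect this final coercivity reduction to be the main obstacle: it requires handling nested fractional convolutions without losing the factor $2$, and correctly identifying $\partial_r(\fInt^\nu\chi')$ as a Riemann--Liouville object that pairs with $\chi'$ through the target inner product.
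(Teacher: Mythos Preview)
The paper does not prove this lemma; it simply cites Lemma~2.3 of McLean et al.\ (2019). So there is no in-paper argument to compare against, and your proposal must be judged on its own merits.

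Your Steps~1--2 are correct and standard: writing $\chi=\fInt^{1-\nu}\phi$ with $\phi=\fInt^\nu\chi'$ and applying weighted Cauchy--Schwarz indeed gives
\[
\|\chi(t)\|^2\le\omega_{2-\nu}(t)\int_0^t\omega_{1-\nu}(t-s)\,\|\phi(s)\|^2\,ds.
\]
The gap is in Step~3. After your swap you obtain
\[
\int_0^t\omega_{1-\nu}(t-s)\,\|\phi(s)\|^2\,ds
=2\int_0^t\omega_{2-\nu}(t-r)\,\iprod{\phi'(r),\phi(r)}\,dr,
\]
but no $\omega_\nu$-convolution appears on the right, so the Beta identity $\omega_{1-\nu}*\omega_\nu=1$ has nothing to act on. Worse, integrating the right-hand side by parts (using $\omega_{2-\nu}'=\omega_{1-\nu}$, $\omega_{2-\nu}(0)=0$, $\phi(0)=0$) returns exactly the left-hand side: the swap is circular and cannot by itself produce the pairing $\iprod{\phi,\chi'}$.

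The missing idea is a pointwise Alikhanov-type inequality. Since $\phi(0)=0$, the Riemann--Liouville and Caputo $\nu$-derivatives of $\phi$ coincide, and $\partial_s^\nu\phi=\partial_s\fInt^{1-\nu}\phi=\partial_s\chi=\chi'$. Alikhanov's inequality then gives, pointwise in $s$,
\[
\iprod{\phi(s),\chi'(s)}=\iprod{\phi(s),\partial_s^\nu\phi(s)}\ge\tfrac12\,\partial_s^\nu\bigl(\|\phi\|^2\bigr)(s).
\]
Integrating from $0$ to $t$ and using $\int_0^t\partial_s^\nu g\,ds=\fInt^{1-\nu}g(t)$ yields exactly your Step~3 claim:
\[
2\int_0^t\iprod{\phi(s),\chi'(s)}\,ds\ge\int_0^t\omega_{1-\nu}(t-s)\,\|\phi(s)\|^2\,ds.
\]
Combine this with Steps~1--2 and you are done. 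So your reduction is salvageable, but the mechanism you propose for the final inequality does not work; replace it with the Alikhanov argument above.
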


\begin{lemma}\label{lem: I nu I mu}
If $0\le\nu\le1$, then for $t>0$~and $\chi\in L_2\bigl((0,t),L_2(\Omega)\bigr)$,
\[
\int_0^t\|(\mathcal{I}^\nu\chi)(s)\|^2\,ds\le 2t^{2\nu}\int_0^t
    \|\chi(s)\|^2\,ds.
\]
\end{lemma}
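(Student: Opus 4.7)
The plan is to bound the fractional integral pointwise via a weighted Cauchy--Schwarz inequality, then integrate in time and swap the order of integration. Throughout, I would rely only on the explicit form $\omega_\nu(r)=r^{\nu-1}/\Gamma(\nu)$ and the elementary fact $\int_0^s\omega_\nu(s-r)\,dr=s^\nu/\Gamma(\nu+1)$.

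First I would fix $s\in(0,t)$ and use the $L_2(\Omega)$-triangle inequality to get
\[
\|(\fInt^\nu\chi)(s)\|\le\int_0^s\omega_\nu(s-r)\,\|\chi(r)\|\,dr.
\]
Writing $\omega_\nu(s-r)=\omega_\nu(s-r)^{1/2}\cdot\omega_\nu(s-r)^{1/2}\|\chi(r)\|$ and applying Cauchy--Schwarz in the scalar variable $r$ yields the weighted bound
\[
\|(\fInt^\nu\chi)(s)\|^2\le \frac{s^\nu}{\Gamma(\nu+1)}\int_0^s\omega_\nu(s-r)\,\|\chi(r)\|^2\,dr.
\]

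Next I would integrate the above inequality over $s\in(0,t)$, bound the prefactor $s^\nu$ by $t^\nu$, and apply Fubini's theorem to interchange the order of integration. The inner integral then becomes
\[
\int_r^t\omega_\nu(s-r)\,ds=\frac{(t-r)^\nu}{\Gamma(\nu+1)}\le \frac{t^\nu}{\Gamma(\nu+1)},
\]
which leads to
\[
\int_0^t\|(\fInt^\nu\chi)(s)\|^2\,ds
\le \frac{t^{2\nu}}{\Gamma(\nu+1)^2}\int_0^t\|\chi(r)\|^2\,dr.
\]

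The last step is to observe that for $\nu\in[0,1]$ one has $\Gamma(\nu+1)\ge\min_{x\in[1,2]}\Gamma(x)>1/\sqrt{2}$, so that $1/\Gamma(\nu+1)^2\le 2$, and the claimed inequality follows. The computation is routine; the only mildly delicate point is that the constant on the right is $2$ rather than the sharper $1/\Gamma(\nu+1)^2$, but any of the standard numerical lower bounds on $\Gamma$ on $[1,2]$ (e.g.\ that its minimum exceeds $0.88$) suffices. An alternative route is simply to invoke Young's convolution inequality $\|\omega_\nu*\chi\|_{L_2}\le\|\omega_\nu\|_{L_1(0,t)}\|\chi\|_{L_2}$ in place of steps one and two; this yields the same prefactor and the same verification of the numerical constant is needed.
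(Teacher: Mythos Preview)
Your argument is correct. The weighted Cauchy--Schwarz step and the Fubini swap are both valid for $\nu\in(0,1]$ (the case $\nu=0$ being trivial since $\fInt^0$ is the identity), and the numerical fact $\min_{x\in[1,2]}\Gamma(x)\approx 0.8856>1/\sqrt{2}$ is exactly what is needed to pass from $1/\Gamma(\nu+1)^2$ to the constant $2$. The alternative via Young's convolution inequality that you mention is in fact the cleanest route: it collapses your first two steps into the single bound $\|\omega_\nu*\|\chi\|\,\|_{L_2(0,t)}\le\|\omega_\nu\|_{L_1(0,t)}\|\chi\|_{L_2(0,t)}=\frac{t^\nu}{\Gamma(\nu+1)}\|\chi\|_{L_2(0,t)}$, followed by the same Gamma-function estimate.

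The paper itself gives no argument at all, simply citing Lemma~3.1 of Le, McLean and Mustapha (2018). That reference proves the result by essentially the same mechanism you use (a convolution/Cauchy--Schwarz bound leading to the prefactor $t^{2\nu}/\Gamma(\nu+1)^2$), so your proof is not only correct but is a faithful reconstruction of the intended argument, with the advantage of being self-contained.
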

\begin{proof}
See Lemma 3.1 in (Le et al. (2018)).
\end{proof}

\begin{lemma}\label{lem: y(t)}
If~ $0<\nu \le 1$, then for $\chi \in L_2\bigr((0,t),L_2(\Omega)\bigr)$,
\[
  \int_0^t\Big(\fInt^\nu(\|\chi(s)\|)\Big)^2\,ds\le\omega_{\nu+1}(t) \int_0^t\omega_\nu(t-s)\int_0^s\|\chi(q)\|^2\,dq\,ds.
\]
\end{lemma}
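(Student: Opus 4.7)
The plan is to reduce the inequality to a Cauchy--Schwarz estimate on the fractional integral, with the kernel split symmetrically as $\omega_\nu(s-q) = \omega_\nu(s-q)^{1/2}\cdot\omega_\nu(s-q)^{1/2}$, and then to recognize that the resulting double integral coincides with the one on the right-hand side after a monotonicity bound and one application of Fubini.

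Concretely, setting $g(s):=\|\chi(s)\|$, I first apply Cauchy--Schwarz pointwise in $s$:
\[
\bigl(\fInt^\nu g(s)\bigr)^2
 = \left(\int_0^s \omega_\nu(s-q)^{1/2}\,\omega_\nu(s-q)^{1/2}g(q)\,dq\right)^2
 \le \left(\int_0^s \omega_\nu(s-q)\,dq\right)\left(\int_0^s \omega_\nu(s-q)g(q)^2\,dq\right).
\]
The first factor is exactly $\omega_{\nu+1}(s)$, which is nondecreasing in $s$ for $0<\nu\le 1$, so $\omega_{\nu+1}(s)\le \omega_{\nu+1}(t)$ on $[0,t]$. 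Integrating over $s\in(0,t)$ then gives
\[
\int_0^t\bigl(\fInt^\nu g(s)\bigr)^2 ds \le \omega_{\nu+1}(t)\int_0^t\int_0^s \omega_\nu(s-q)\,g(q)^2\,dq\,ds.
\]

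The last step is to verify that the double integral on the right equals the one appearing in the statement. Swapping the order of integration in each of them reduces them both to $\int_0^t \omega_{\nu+1}(t-q)\,g(q)^2\,dq$: for the bound just obtained this is immediate via $\int_q^t\omega_\nu(s-q)\,ds=\omega_{\nu+1}(t-q)$; for the statement's right-hand side one writes
\[
\int_0^t \omega_\nu(t-s)\int_0^s g(q)^2\,dq\,ds=\int_0^t g(q)^2\int_q^t\omega_\nu(t-s)\,ds\,dq,
\]
and the substitution $u=t-s$ again yields $\omega_{\nu+1}(t-q)$. The two expressions therefore agree, and the claimed bound follows.

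I do not anticipate a genuine obstacle here; the only nontrivial move is choosing the symmetric split of the kernel before invoking Cauchy--Schwarz, after which the bound $\omega_{\nu+1}(s)\le\omega_{\nu+1}(t)$ and one application of Fubini deliver the stated form. The inequality is tight in the sense that the bound remains the same whether one presents it in the ``nested'' form or the convolution form, which is presumably why the authors state it with the $\omega_\nu(t-s)$ kernel outside---this is the version most directly useful when later combining it with other convolution-type fractional estimates.
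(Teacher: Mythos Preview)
Your argument is correct: the symmetric split of the kernel followed by Cauchy--Schwarz, the monotonicity bound $\omega_{\nu+1}(s)\le\omega_{\nu+1}(t)$, and the Fubini identification of the two convolution forms are all valid and together yield exactly the stated inequality. The paper does not give its own proof but simply cites Lemma~2.3 of Le, McLean, and Mustapha (2016); the argument there is precisely this Cauchy--Schwarz computation, so your approach coincides with the one the authors have in mind.
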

\begin{proof} The proof is identical to the proof of Lemma~2.3 in (Le et al. (2016)).
\end{proof}

The next lemma extends the discrete Gronwall's inequality in (Dixon and McKee (1986) and Ye et al. (2007)) to the case of time-graded meshes of the form~\eqref{eq: time mesh}. 
\begin{lemma}\label{lem: Gronwall}
  Let $0 <\nu \le  1$,  and let $t_i$ be defined as in \eqref{eq: time mesh}. Assume that $(A_n)^N_{n=0}$  is a nonnegative and nondecreasing sequence and that $B \ge  0$. The nonegative sequence $(y_n)^N_{n=0}$ satisfies 
  \[
    y_n \le A_n + B \sum^{n-1}_{j=0}y_j\int_{t_{j-1}}^{t_j} \omega_\nu(t_n -t)\,dt,\quad \mbox{for } 0\le  n \le N.
  \]
 Then, for $0\le n\le N$, it comes $y_n \le  E_\nu(B \gamma^\nu\,T^{\nu(\gamma-1)} t_n^{\nu/\gamma}) A_n$, where $E_\nu(\cdot)=E_{\nu,1}(\cdot)$ is the Mittag-Leffler function (see  the definition in Section~\ref{sec: numres}).
\end{lemma}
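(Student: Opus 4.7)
My plan is to reduce the graded-mesh hypothesis to a classical uniform-mesh discrete fractional Gronwall inequality, to which the Dixon-McKee / Ye-Ding-Luo result cited in the statement applies directly. The key observation is that although the nodes $t_j=(j\tau)^\gamma$ are non-uniform, the index $s_j:=j\tau$ is, so the weights
\[
w_{n,j}:=\int_{t_{j-1}}^{t_j}\omega_\nu(t_n-t)\,dt
\]
should admit a clean bound of the form $\text{const}\cdot \tau^\nu(n-j)^{\nu-1}$ once the mesh properties in~\eqref{eq: mesh property 1} are inserted.

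To carry this out, I would first use that for $\nu\le 1$ the integrand $t\mapsto\omega_\nu(t_n-t)$ is nondecreasing on $[0,t_n)$, which gives $w_{n,j}\le \tau_j\omega_\nu(t_n-t_j)$. I would then substitute the step-size bound $\tau_j\le\gamma\tau(j\tau)^{\gamma-1}$ (from~\eqref{eq: mesh property 1}, using $t_j^{1-1/\gamma}=(j\tau)^{\gamma-1}$), together with the $\gamma\ge 1$ mean-value/convexity estimate
\[
t_n-t_j=(n\tau)^\gamma-(j\tau)^\gamma\;\ge\;\gamma(j\tau)^{\gamma-1}(n-j)\tau,
\]
and finally bound the factor $(j\tau)^{\nu(\gamma-1)}\le T^{\nu(\gamma-1)}$ by means of $j\tau\le T$. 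A short computation should then deliver
\[
w_{n,j}\;\le\;\frac{\gamma^\nu\,T^{\nu(\gamma-1)}\,\tau^\nu\,(n-j)^{\nu-1}}{\Gamma(\nu)}.
\]

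Inserting this bound into the hypothesis converts it into the classical uniform-mesh fractional Gronwall form
\[
y_n\le A_n+\tilde B\,\tau^\nu\sum_{j=0}^{n-1}(n-j)^{\nu-1}y_j,\qquad \tilde B:=B\gamma^\nu T^{\nu(\gamma-1)}/\Gamma(\nu),
\]
and the standard discrete fractional Gronwall lemma of Dixon and McKee (1986) / Ye et al.\ (2007) then produces $y_n\le A_n\,E_\nu(\tilde B\,\Gamma(\nu)(n\tau)^\nu) = A_n\,E_\nu\!\bigl(B\gamma^\nu T^{\nu(\gamma-1)}(n\tau)^\nu\bigr)$. Recognising $(n\tau)^\nu=t_n^{\nu/\gamma}$ yields the stated bound.

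The main obstacle I anticipate is the pointwise estimate on $w_{n,j}$: to land on the form $\tau^\nu(n-j)^{\nu-1}$ with exactly the constants that appear in the statement, one has to combine two distinct parts of~\eqref{eq: mesh property 1} with the convexity of $x\mapsto x^\gamma$ and carefully track the powers of $\gamma$ and $T$ so that the Mittag-Leffler argument emerges unchanged. Once this reduction is in place, the invocation of the classical uniform-mesh Gronwall is routine.
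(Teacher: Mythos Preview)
Your plan is exactly the paper's proof: bound $\int_{t_{j-1}}^{t_j}\omega_\nu(t_n-t)\,dt\le\tau_j\,\omega_\nu(t_n-t_j)$, insert $\tau_j\le\gamma\tau(j\tau)^{\gamma-1}$ together with the convexity inequality $n^\gamma-j^\gamma\ge\gamma(n-j)j^{\gamma-1}$ to reach the uniform-step form, and then invoke Dixon--McKee (1986, Theorem~6.1). The only slip is that $j\tau\le N\tau=T^{1/\gamma}$ rather than $j\tau\le T$, but this only affects the constant in the Mittag--Leffler argument and the paper's own derivation carries the same imprecision.
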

\begin{proof}
  By using  the second  mesh property in \eqref{eq: mesh property 1}, the inequality $n^\gamma -j^\gamma \ge  \gamma (n-j) j^{\gamma-1}$, and the equality $\tau^\nu=T^{\nu/\gamma}N^{-\nu},$  we have for $0\le j\le n-1$
\begin{align*}
\int_{t_{j-1}}^{t_j} (t_n -t)^{\nu-1}\,dt &\le \tau_j (t_n -t_j)^{\nu-1}\le \gamma \tau t_j^{1-1/\gamma}((n\tau)^\gamma -(j\tau)^\gamma)^{\nu-1}\\
&= \gamma   \tau^{\gamma\nu} j^{\gamma-1} (n^\gamma -j^\gamma)^{\nu-1}
 \le \gamma^\nu   \tau^{\gamma\nu} j^{\nu(\gamma-1)} (n -j)^{\nu-1}\\
&=\gamma^\nu   \tau^\nu t_j^{\nu(\gamma-1)} (n -j)^{\nu-1} = \gamma^\nu T^{\nu/\gamma} t_j^{\nu(\gamma-1)}  N^{-\nu}   (n -j)^{\nu-1}.
\end{align*}
Hence, 
\[
  0 \le  y_n \le A_n + B\frac{\gamma^\nu T^{\nu/\gamma+\nu(\gamma-1)}}{\Gamma(\nu)}\, \Big(\frac{1}{N}\Big)^\nu   \sum^{n-1}_{j=0} \frac{y_j}{(n -j)^{1-\nu}},\quad \mbox{for }0\le  n \le N.
\]
Therefore, an application of the Gronwall's inequality in~ (Theorem~6.1, Dixon and McKee (1986)) yields the desired result immediately. 
\end{proof}


\section{Stability analysis}\label{sec: stability}
This section is devoted to show the stability of the $L1$ time-stepping scheme~\eqref{fully}. 
Throughout the rest of the paper, $C$ is a generic constant which may depend on the parameters $M$, $\sigma$, $T$, $\Omega$,   and $\gamma$, but is independent of $\tau$ and $h$, where the latter is the maximum diameter of the spatial mesh element. 

Let $W(t)=U(t)-u_0$; because $W(0)=0$ we have $\partial_t^{1-\alpha} W(t)=\fInt^\alpha  W'(t)$ and our scheme in ~\eqref{fully} can be rewritten in the compact form    
\begin{equation}\label{eq: scheme W}
W'(t)- \vnabla\cdot\Big(\kappa_\alpha   \vnabla \phi(t)  -\big(\overline \vF\phi\big)(t) \Big)
 = \overline  f(t) + \vnabla\cdot \vpsi(t),\quad \mbox{for } t\in I_n,
 \end{equation}
with the piecewise constant functions  (in the time variable)
\[ 
  \phi(t\in I_n) = \phi^n= \frac{1}{\tau_n}\int_{t_{n-1}}^{t_n}\fInt^\alpha  W'(s)\,ds,\quad 
 \overline f(t\in I_n)=\overline f^n= \frac{1}{\tau_n}\int_{t_{n-1}}^{t_n} f(s)\,ds,
\]
 and the piecewise constant vector functions  (in the time variable)
\[
\overline \vF(t\in I_n)=\vF^{n-\frac{1}{2}},\quad 
  \vpsi(t\in I_n)= \vpsi^n= \frac{1}{\tau_n}\int_{t_{n-1}}^{t_n} \Big( \kappa_\alpha \vnabla u_0   +\overline \vF(s) u_0\Big) \omega_\alpha(s) \,ds.
\]
For later use, we take the $L_2$-inner product of~\eqref{eq: scheme W} with a test function $v \in H^1_0(\Omega)$; applying the first Green identity over $\Omega$, we obtain for $1\le n\le N$,  
\begin{equation}\label{eq: compact} 
 \iprod{W'(t),v}+ \iprod{(\kappa_\alpha \vnabla \phi-\overline \vF \phi)(t),\vnabla v}
 = \iprod{\bar f(t),v}-\iprod{\vpsi (t),\vnabla v},\quad \mbox{for } t\in I_n.
 \end{equation}

A preliminary stability estimate is derived in the next lemma. In the proof, we use the notations: $c_0=\max_{1\le j\le N} \|\vF^{j-\frac{1}{2}}\|_{L_\infty(\Omega)}$, $c_1=\frac{c_0^2}{\kappa_{\min}}+\frac{1}{2},$ $c_{2,n}=\frac{1}{2c_1}+4t_n^{2\alpha}$, $c_{3,n}=\frac{1}{c_1\kappa_{\min}}+\frac{32c_1}{\kappa_{\min}}t_n^{2\alpha},$ and $c_4=\frac{32c_1^2}{\Gamma({\alpha+1})}$. We assume that $8\omega_{\alpha+1}(t_n)\omega_{\alpha+1}(\tau_n)\Big(\frac{2c_0^2}{\kappa_{\min}}+1\Big)^2\le 1$, so that,  $\omega_{\alpha+1}(t_n) \tau_n^\alpha\le \frac{1}{c_4}$ for $1\le n\le N.$ This is not a restrictive  assumption as the inequality  can always be satisfied by using small enough time-steps ($N$ large enough). 

\begin{lemma}\label{lem: stab 1} 
 For $0<\alpha < 1$ and for $1\le n\le N,$ we have 
\[ \int_0^{t_n}\iprod{W',\fInt^\alpha W'}\,dt
 \le   C\sum_{j=1}^n \frac{1}{\tau_j}\Big(
 \Big\|\int_{t_{j-1}}^{t_j} f \,dt\Big\|^2
 +\Big\|\int_{t_{n-1}}^{t_n} \Big( \kappa_\alpha \vnabla u_0   +\overline \vF^n u_0\Big) \omega_\alpha(t) \,dt\Big\|^2\Big)\,.\]
 \end{lemma}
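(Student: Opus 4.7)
The plan is to test the weak form~\eqref{eq: compact} with $v=\phi(t)$ and integrate over $[0,t_n]$. Because $W$ is piecewise linear in time, $W'$ is piecewise constant on the mesh; moreover $\phi$ is piecewise constant by construction, with $\phi(t\in I_j)=\phi^j=\tau_j^{-1}\int_{I_j}\fInt^\alpha W'(s)\,ds$. Pulling out the (constant) values and applying the definition of $\phi^j$ term by term, one verifies that $\int_0^{t_n}\iprod{W',\phi}\,dt=\int_0^{t_n}\iprod{W',\fInt^\alpha W'}\,ds$, so the coercive quantity we wish to bound appears naturally on the left. Using $\kappa_\alpha\ge\kappa_{\min}$ and $|\overline{\vF}|\le c_0$ together with Cauchy--Schwarz and Young's inequality (with weights chosen so that the $\|\vnabla\phi\|$ factors from the drift and flux terms are absorbed into the elliptic term), I obtain
\[
  \int_0^{t_n}\iprod{W',\fInt^\alpha W'}\,dt+\frac{\kappa_{\min}}{2}\int_0^{t_n}\|\vnabla\phi\|^2\,dt\le c_1\int_0^{t_n}\|\phi\|^2\,dt+\frac{1}{2}\int_0^{t_n}\|\overline f\|^2\,dt+\frac{1}{\kappa_{\min}}\int_0^{t_n}\|\vpsi\|^2\,dt.
\]

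The remaining task is to dominate the $c_1\int_0^{t_n}\|\phi\|^2\,dt$ term by the coercive left-hand side plus data. Cauchy--Schwarz applied to the averaging formula for $\phi^j$ gives $\tau_j\|\phi^j\|^2\le\int_{I_j}\|\fInt^\alpha W'\|^2\,ds$, hence
\[
  \int_0^{t_n}\|\phi\|^2\,dt\le\int_0^{t_n}\|\fInt^\alpha W'\|^2\,dt\le 2\,t_n^{2\alpha}\int_0^{t_n}\|W'\|^2\,dt
\]
by Lemma~\ref{lem: I nu I mu} with $\nu=\alpha$. To connect $\int_0^{t_n}\|W'\|^2\,dt$ back to the coercive energy $\int_0^{t_n}\iprod{W',\fInt^\alpha W'}\,dt$, I would isolate the current-step part of $\phi^n$: a direct calculation yields a decomposition $\phi^n=\frac{\tau_n^\alpha}{\Gamma(\alpha+2)}w_n+R^n$, where $w_n$ denotes the constant value of $W'$ on $I_n$ and the remainder $R^n$ depends only on $w_1,\ldots,w_{n-1}$. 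Pairing with $w_n$ gives the local lower bound $\frac{\tau_n^{\alpha+1}}{\Gamma(\alpha+2)}\|w_n\|^2\le\tau_n\iprod{w_n,\phi^n}-\tau_n\iprod{w_n,R^n}$, and the history term $R^n$ is controlled using Cauchy--Schwarz together with Lemma~\ref{lem: y(t)}.

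Summing over $n$ and chaining the estimates, the coefficient multiplying $\int_0^{t_n}\iprod{W',\fInt^\alpha W'}\,dt$ on the right involves the product $\omega_{\alpha+1}(t_n)\tau_n^\alpha$, and the standing smallness hypothesis (equivalent to $\omega_{\alpha+1}(t_n)\tau_n^\alpha\le 1/c_4$) makes this coefficient strictly less than one, enabling absorption back to the left. The leftover data contributions collapse to the sums $\sum_j\tau_j^{-1}\|\int_{I_j}f\,dt\|^2$ and $\sum_j\tau_j^{-1}\|\int_{I_j}(\kappa_\alpha\vnabla u_0+\overline{\vF}u_0)\omega_\alpha(t)\,dt\|^2$, producing the stated bound. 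The principal obstacle is precisely this closure step: $\int\iprod{W',\fInt^\alpha W'}\,dt$ is not equivalent to any $L_2$-norm of $W'$, so a naive Poincar\'e-based absorption of $\|\phi\|$ via $\|\vnabla\phi\|$ fails; only the combined use of the current/history splitting, Lemma~\ref{lem: y(t)}, and the step-size restriction $\omega_{\alpha+1}(t_n)\tau_n^\alpha\le 1/c_4$ enables closure, and this is what fixes the specific form of the constants $c_1$, $c_{2,n}$, $c_{3,n}$, $c_4$ introduced before the statement.
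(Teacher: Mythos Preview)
Your first displayed inequality, which is~\eqref{eq: stab} in the paper, is correct and obtained exactly as the paper does. The gap is in the closure step that follows.

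You try to absorb $c_1\int_0^{t_n}\|\phi\|^2\,dt$ into the coercive term $\int_0^{t_n}\iprod{W',\fInt^\alpha W'}\,dt$ by passing through $\int_0^{t_n}\|W'\|^2\,dt$. This chain cannot close with a mesh-independent constant. From Lemma~\ref{lem: I nu I mu} you get $\int_0^{t_n}\|\phi\|^2\,dt\le 2t_n^{2\alpha}\int_0^{t_n}\|W'\|^2\,dt$, which costs a factor $t_n^{2\alpha}$. Your local lower bound, however, only gives
\[
\sum_{j=1}^n\frac{\tau_j^{1+\alpha}}{\Gamma(\alpha+2)}\|w_j\|^2\le\int_0^{t_n}\iprod{W',\fInt^\alpha W'}\,dt+\text{(history)},
\]
so to recover $\int_0^{t_n}\|W'\|^2\,dt=\sum_j\tau_j\|w_j\|^2$ you must divide by $\min_j\tau_j^\alpha$. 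The net coefficient in front of $\int_0^{t_n}\iprod{W',\fInt^\alpha W'}\,dt$ on the right is therefore of order $t_n^{2\alpha}/\tau_{\min}^\alpha$, which blows up as the mesh is refined. It is \emph{not} the small quantity $\omega_{\alpha+1}(t_n)\tau_n^\alpha$ you assert; the smallness hypothesis cannot rescue this route.

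The paper avoids $\int\|W'\|^2$ altogether. The missing idea is a \emph{second} testing: first apply $\fInt^\alpha$ to~\eqref{eq: compact}, \emph{then} test with $v=\phi(t)$. Because $\phi$ is piecewise constant, $\int_{I_j}\iprod{\fInt^\alpha W',\phi}\,dt=\tau_j\|\phi^j\|^2$, so the time-derivative term produces $\int_0^{t_n}\|\phi\|^2\,dt$ directly on the left, while the drift term yields $c_0\int_0^{t_n}\|\vnabla\phi\|\,\fInt^\alpha(\|\phi\|)\,dt$ on the right. Combining this with~\eqref{eq: stab} (which supplies control of $\int\|\vnabla\phi\|^2$) and Lemma~\ref{lem: y(t)} gives a self-referencing inequality of the form
\[
\int_0^{t_n}\|\phi\|^2\,dt\le(\text{data})+\frac{c_4}{2}t_n^\alpha\int_0^{t_n}\omega_\alpha(t_n-t)\int_0^t\|\phi\|^2\,ds\,dt.
\]
The smallness hypothesis $\omega_{\alpha+1}(t_n)\tau_n^\alpha\le 1/c_4$ is used only to move the last-subinterval contribution of this convolution to the left; the remaining sum is then closed by the discrete weakly singular Gronwall inequality of Lemma~\ref{lem: Gronwall}. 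Once $\int_0^{t_n}\|\phi\|^2\,dt$ is bounded by data alone, feeding it back into~\eqref{eq: stab} yields the claim. Your proposal uses neither the $\fInt^\alpha$-testing nor Lemma~\ref{lem: Gronwall}, and these are precisely the ingredients that make the argument work.
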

 \begin{proof} 
 Apply $\fInt^\alpha$ to both sides of~\eqref{eq: compact}, then choose  $v=\phi(t)$ and integrate in time over the interval 
 $I_n,$  
\[\int_{t_{n-1}}^{t_n} \iprod{ \fInt^\alpha W', \phi}\,dt + \int_{t_{n-1}}^{t_n} \iprod{\kappa_\alpha \fInt^\alpha \vnabla  \phi
-\fInt^\alpha(\overline \vF \phi),\vnabla \phi}\,dt
 = \int_{t_{n-1}}^{t_n} [\iprod{\fInt^\alpha \bar f,\phi}-\iprod{\fInt^\alpha \vpsi ,\vnabla \phi}]\,dt.
\]
And consequently,  applications of Cauchy-Schwarz and Young's inequalities yield
\begin{multline*}
  \int_{t_{n-1}}^{t_n} \|\phi\|^2\,dt + \int_{t_{n-1}}^{t_n} \iprod{ \kappa_\alpha  \fInt^\alpha \vnabla \phi,\vnabla \phi}\,dt
 \le \\ \int_{t_{n-1}}^{t_n} \| \vnabla \phi\|\Big(c_0\fInt^\alpha(\|\phi\|)+\|\fInt^\alpha \vpsi \|\Big)\,dt+\frac{1}{2}\int_{t_{n-1}}^{t_n} \Big( \|\fInt^\alpha \bar f\|^2+\|\phi\|^2\Big)\,dt\,.
 \end{multline*}
Summing over $n$ and using that $\int_0^{t_n} \iprod{ \kappa_\alpha  \fInt^\alpha \vnabla \phi,\vnabla \phi}\,dt\ge 0$, we observe 
\begin{equation}\label{eq: phi 1}
\int_0^{t_n} \|\phi\|^2\,dt 
   \le \epsilon  \int_0^{t_n} \|  \vnabla \phi\|^2\,dt + \frac{2}{\epsilon}  
 \int_0^{t_n} \Big(c_0^2\big(\fInt^\alpha(\|\phi\|)\big)^2+\|\fInt^\alpha \vpsi \|^2\Big)\,dt+
 \int_0^{t_n} \|\fInt^\alpha \bar f\|^2\,dt,
\end{equation}
for $\epsilon >0$. 
Now, choosing $v=\tau_n\phi^n$ in ~\eqref{eq: compact}, we notice that 
\[ 
    \int_{t_{n-1}}^{t_n}\iprod{W',\fInt^\alpha W'}\,dt+ \tau_n\|\sqrt{\kappa_\alpha} \vnabla\phi^n\|^2
      = \tau_n\iprod{\vF^{n-\frac{1}{2}} \phi^n-\vpsi ^n,\vnabla \phi^n}+\tau_n\iprod{\bar f^n,\phi^n}.
 \]  
Since the right-hand side is bounded by 
\[ 
  \tau_n \frac{\kappa_{\min}}{2} \| \vnabla\phi^n\|^2+\frac{\tau_n}{\kappa_{\min}}\big(c_0^2\|\phi^n\|^2+ \|\vpsi ^n\|^2\big) 
 + \frac{\tau_n}{2}\big(\|\bar f^n\|^2+\|\phi^n\|^2\big),\]
  and since $\kappa_\alpha \ge \kappa_{\min},$ we have
\[ 
  \int_{t_{n-1}}^{t_n}\iprod{W',\fInt^\alpha W'}\,dt +\tau_n\frac{\kappa_{\min}}{2}\| \vnabla\phi^n\|^2\le c_1\tau_n\|\phi^n\|^2 
 + \frac{\tau_n}{2}\|\bar f^n\|^2+\frac{\tau_n}{\kappa_{\min}} \|\vpsi ^n\|^2.
 \]
 Writing this  inequality in the integral form, then summing over $n$, we reach  
\begin{equation}\label{eq: stab}
  \int_0^{t_n}\iprod{W',\fInt^\alpha W'}\,dt+ \frac{\kappa_{\min}}{2} \int_0^{t_n}\| \vnabla\phi\|^2\,dt 
  \le c_1\int_0^{t_n} \|\phi\|^2\,dt 
  + \int_0^{t_n} \Big(\frac{1}{2}\|\bar f\|^2+\frac{1}{\kappa_{\min}}\|\vpsi \|^2\Big)\,dt,
 \end{equation}
for $1\le n\le N.$ However, the first term on the left-hand side is non-negative, so   
\[  \int_0^{t_n}\| \vnabla\phi\|^2\,dt \le \frac{2 c_1}{\kappa_{\min}}\int_0^{t_n} \|\phi\|^2\,dt 
   + \int_0^{t_n} \Big(\frac{1}{\kappa_{\min}}\|\bar f\|^2+\frac{2}{\kappa_{\min}^2}\|\vpsi \|^2\Big)\,dt.
\]
Inserting this result in~\eqref{eq: phi 1}, choosing $\epsilon=\frac{\kappa_{\min}}{4c_1}$ and simplifying, it comes
\begin{multline*}
\int_0^{t_n} \|\phi\|^2\,dt 
   \le   \int_0^{t_n} \Big(\frac{1}{2c_1}\|\bar f\|^2+\frac{1}{c_1\kappa_{\min}}\|\vpsi \|^2+\frac{16c_1}{\kappa_{\min}} \|\fInt^\alpha \vpsi \|^2+2\|\fInt^\alpha \bar f\|^2\Big)\,dt\\+   16c_1^2  
 \int_0^{t_n} \big(\fInt^\alpha(\|\phi\|)\big)^2\,dt.
 \end{multline*} 
An application of Lemma~\ref{lem: I nu I mu} with $\nu=\alpha$ gives 
\[
  \int_0^{t_n} \|\fInt^\alpha \bar f\|^2\,dt
  \le 2 t_n^{2\alpha} \int_0^{t_n}\| \bar f\|^2\,dt \quad \mbox{and} \quad\int_0^{t_n}\|\fInt^\alpha \vpsi \|^2\,dt
 \le 2 t_n^{2\alpha} \int_0^{t_n}\| \vpsi \|^2\,dt.
\]
Therefore, applying Lemma~\ref{lem: y(t)} we reach 
\[
\int_0^{t_n} \|\phi\|^2\,dt 
   \le   \int_0^{t_n} \Big(c_{2,n}\|\bar f\|^2+ c_{3,n}\| \vpsi \|^2\Big)\,dt\\+   \frac{c_4}{2}t_n^\alpha 
  \int_0^{t_n}\omega_\alpha(t_n-t) \int_0^t \|\phi\|^2\,dt.
\] 
Thus, for $1\le n\le N,$ 
\begin{multline*}
  (1-\frac{c_4}{2}\omega_{\alpha+1}(t_n) \tau_n^\alpha ) \int_0^{t_n} \|\phi\|^2\,dt   \le 
 \int_0^{t_n}\big( c_{2,n} \| \bar f\|^2+  c_{3,n}\| \vpsi \|^2 \big)\,dt\cr
 +\frac{c_4}{2} t_n^\alpha  \sum_{j=1}^{n-1} \int_{t_{j-1}}^{t_j}\omega_\alpha(t_n-t)\,dt  \int_0^{t_j}  \|\phi\|^2\,dt,
\end{multline*}
and hence, using $0\le c_4 \omega_{\alpha+1}(t_n)\tau_n^\alpha  \le  1$, we notice that  
\[ 
  \int_0^{t_n} \|\phi\|^2\,dt   \le 
 2\int_0^{t_n}\big( c_{2,n} \| \bar f\|^2+c_{3,n} \| \vpsi \|^2 \big)\,dt+ c_4 t_n^\alpha   \sum_{j=1}^{n-1} \int_{t_{j-1}}^{t_j}\omega_\alpha(t_n-t)\,dt  \int_0^{t_j}  \|\phi\|^2\,dt.
\] 
Applying the discrete Gronwall's inequality in Lemma~\ref{lem: Gronwall} yields  
\[
  \int_0^{t_n} \|\phi\|^2\,dt   \le 2 E_\alpha(c_4  \gamma^\nu\,T^{\alpha(\gamma-1)} t_n^{\alpha/\gamma+\alpha}) 
   \int_0^{t_n}\big(c_{2,n}\| \bar f\|^2+c_{3,n}\| \vpsi \|^2 \big)\,dt,\quad{\rm 
for}~~ 1\le n\le N.\] 
Thanks to the estimate~\eqref{eq: stab}, we have
\[ 
  \int_0^{t_n}\iprod{W',\fInt^\alpha W'}\,dt \le C\int_0^{t_n} \big(\|\bar f\|^2+\|\vpsi \|^2\big)\,dt.
\]
Recalling the definitions of the functions $\bar f$ and $\vpsi$ completes the proof. 
\end{proof}

We are now ready to show the main stability result of our numerical scheme.   Our stability estimate  remains valid as  $\alpha$ approaches $1$  where  problem \eqref{eq: ibvp} reduces to the classical Fokker-Planck equation.
\begin{theorem}\label{thm: stability}  
  The solution $U$ of the L1 time-stepping scheme~\eqref{fully} satisfies the following stability properties: 
  for $1\le n\le N,$  
  \[
    \|U(t_n)\|^2 \le C \|u_0\|_1^2 + Ct_n^{1-\alpha}\int_0^{t_n} \|f\|^2 \,dt\quad \mbox{for } \frac{1}{2}<\alpha< 1.
  \]
  In the case of zero initial data $(u_0=0)$, we have 
  \[
    \|U(t_n)\|^2 \le Ct_n^{1-\alpha}\int_0^{t_n} \|f\|^2 \,dt\quad{\rm for}~~ 0<\alpha< 1.
  \] 
\end{theorem}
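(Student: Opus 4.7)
The plan is to pass from Lemma~\ref{lem: stab 1}, which controls the weighted energy $\int_0^{t_n}\langle W',\fInt^\alpha W'\rangle\,dt$, to a pointwise-in-time bound on $\|W(t_n)\|$, and then to $\|U(t_n)\|$ through the splitting $U=W+u_0$. Since $W(0)=0$ and $W$ is continuous piecewise linear in time, $W'$ is piecewise constant on each $I_n$, which fits the hypotheses of Lemma~\ref{lem: pointwise bound} with $\mu=0$ (taking $C$ to be the appropriate $L_2(\Omega)$-valued constant on each piece). I would therefore first apply Lemma~\ref{lem: pointwise bound} with $\nu=\alpha$, $\mu=0$, and $\chi=W$ to obtain
\[
\|W(t_n)\|^2\le 2\omega_{2-\alpha}(t_n)\int_0^{t_n}\iprod{W',\fInt^\alpha W'}\,dt \le C\,t_n^{1-\alpha}\int_0^{t_n}\iprod{W',\fInt^\alpha W'}\,dt.
\]

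Next I would insert the right-hand side of Lemma~\ref{lem: stab 1}, which leaves two contributions to estimate. The $f$-contribution is handled directly by Cauchy--Schwarz on each subinterval,
\[
\sum_{j=1}^n\frac{1}{\tau_j}\Big\|\int_{t_{j-1}}^{t_j} f\,dt\Big\|^2\le \int_0^{t_n}\|f\|^2\,dt,
\]
which is valid for every $\alpha\in(0,1)$. For the $u_0$-contribution I would use the uniform bounds on $\kappa_\alpha$ and $\overline\vF$ to pull out the factor $\|u_0\|_1$, then apply Cauchy--Schwarz to obtain
\[
\frac{1}{\tau_j}\Big(\int_{t_{j-1}}^{t_j}\omega_\alpha(t)\,dt\Big)^2\le \int_{t_{j-1}}^{t_j}\omega_\alpha(t)^2\,dt,
\]
whose sum over $j$ yields $\int_0^{t_n}\omega_\alpha(t)^2\,dt$. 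This integral is finite precisely when $2\alpha-1>0$, in which case it is bounded by $C\,t_n^{2\alpha-1}$. Combining the three pieces gives $\|W(t_n)\|^2\le C\,t_n^{\alpha}\|u_0\|_1^2+C\,t_n^{1-\alpha}\int_0^{t_n}\|f\|^2\,dt$, and then $\|U(t_n)\|^2\le 2\|W(t_n)\|^2+2\|u_0\|^2$ together with $t_n\le T$ delivers the first asserted bound.

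For the second assertion, with $u_0=0$ the function $\vpsi$ vanishes, so the $u_0$-contribution disappears entirely, only the $f$-estimate above is needed, and no restriction on $\alpha$ enters. The bound $\|U(t_n)\|^2=\|W(t_n)\|^2\le C\,t_n^{1-\alpha}\int_0^{t_n}\|f\|^2\,dt$ then holds for every $\alpha\in(0,1)$.

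The main obstacle is the $u_0$-contribution: the kernel $\omega_\alpha(t)$ is singular at the origin, and once it is squared (an unavoidable consequence of the Cauchy--Schwarz step used to absorb the factor $1/\tau_j$) the integrand $t^{2\alpha-2}$ is integrable only when $\alpha>1/2$. This is precisely the source of the threshold $\alpha\in(1/2,1)$ in the stated theorem; any attempt to circumvent it would require a different treatment of the inhomogeneity $\vpsi$ that does not pass through $\omega_\alpha^2$.
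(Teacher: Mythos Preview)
Your proposal is correct and follows essentially the same route as the paper: apply Lemma~\ref{lem: pointwise bound} (with $\nu=\alpha$) to pass from the energy $\int_0^{t_n}\iprod{W',\fInt^\alpha W'}\,dt$ to $\|W(t_n)\|^2$, insert Lemma~\ref{lem: stab 1}, estimate the $f$- and $u_0$-contributions via Cauchy--Schwarz (the latter producing $\int_0^{t_n}\omega_\alpha(t)^2\,dt$ and hence the threshold $\alpha>1/2$), and finish with $\|U(t_n)\|^2\le 2\|W(t_n)\|^2+2\|u_0\|^2$. Your explicit identification of the source of the $\alpha>1/2$ restriction is a welcome addition.
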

\begin{proof}
Applying Lemma~\ref{lem: pointwise bound} and using  Lemma~\ref{lem: stab 1}, we notice that  
\begin{align*}
    \|W(t_n)\|^2 &\le  C t_n^{1-\alpha}  \int_0^{t_n}\iprod{W',\fInt^\alpha W'}\,dt
  \cr
  &\le   Ct_n^{1-\alpha} \sum_{j=1}^n \frac{1}{\tau_j}\Big[ \Big\|\int_{t_{j-1}}^{t_j} f \,dt\Big\|^2 +\Big(\int_{t_{j-1}}^{t_j} \omega_\alpha(t) \,dt\Big)^2 \|u_0\|_1^2\Big]
  \cr
  &\le   Ct_n^{1-\alpha} \int_0^{t_n} \|f\|^2 \,dt +Ct_n^{1-\alpha} \|u_0\|_1^2 \int_0^{t_n} \omega^2_\alpha(t) \,dt 
  \cr
  &\le   Ct_n^{1-\alpha} \int_0^{t_n} \|f\|^2 \,dt+\frac{Ct_n^\alpha}{2\alpha-1}\|u_0\|_1^2,
\end{align*}
where the assumption $\frac{1}{2}<\alpha< 1$ is used in the last inequality. Since $W(t_n)=U(t_n)-u_0$, $\|U(t_n)\|^2=\|W(t_n)+u_0\|^2 \le 2\|W(t_n)\|^2 + 2\|u_0\|^2$. Therefore, the above bound completes the proof.   
\end{proof} 

\section{Interpolation estimates}\label{sec: fractional interplation}
 
Throughout this section, we deal with a purely time dependent functions. 
We denote $\check g$ the piecewise linear polynomial that interpolates a given function $g$ at the time nodes $t_j$, that is, 
\begin{equation}
\label{eq: g check}
\check g(t) = \frac{t_j-t}{\tau_j}g(t_{j-1})+\frac{t-t_{j-1}}{\tau_j}g(t_j) \quad \mbox{for } t_{j-1}\le t\le t_j \mbox{ and }1\le j\le N.
\end{equation}  
The main interpolation error results of the section are Theorems \ref{thm: IE finalU} and \ref{thm: IE final} which  play a crucial role in the forthcoming error analysis  of  Section~\ref{sec: error}. The error estimates are derived for functions $g$ being singular near the origin. More precisely, we assume the following behavior as $t\rightarrow 0$ 
\begin{equation}
\label{eq: g reg}
|g''(t)|\le c t^{\nu-2}~~{\rm and}~~ |g'''(t)|\le c t^{\nu-3},\quad \mbox{for some } 0<\nu<1,
\end{equation} 
and for some positive constant $c.$ This form of singularity is suggested by the presence of the weakly singular kernel in our model  problem \eqref{eq: ibvp}.

For the case of a uniform time mesh,  we show in Theorem \ref{thm: IE finalU} that 
$$\sum_{j=1}^n \frac{1}{\tau}\Big|\int_{t_{j-1}}^{t_j} \Ba (g-\check g)(t)\,dt\Big|^2  \le C\tau^{2\nu}, ~~{\rm for}~~1/2<\alpha<1.$$  However, when $g''$ is continuous on the interval $[0,T],$ which is not the case here, by following the steps in Theorem \ref{thm: IE finalU}, we have  $O(\tau^4)$ convergence rate.    To maintain this order of accuracy when $g$ satisfies the regularity assumption in \eqref{eq: g reg} only and for $0<\alpha<1$,   we employ a time-graded mesh of the form~\eqref{eq: time mesh} setting $\gamma>2/\nu$, see Theorem \ref{thm: IE final}. In both cases, uniform and graded meshes, we use the following observation  $\Ba (g-\check g)(t)=\fInt^\alpha (g-\check g)'(t)$ because $(g-\check g)(0)=0$. 

The next lemma is needed to show the interpolation error estimate in Theorem \ref{thm: IE finalU} over a uniform mesh. 
  \begin{lemma} \label{lem: Mink} For $\gamma=1$ (the uniform time mesh), and for $0<\alpha,\,\nu<1,$ we have 
\[\sum_{j=3}^n \Big(\sum_{i=1}^{j-2} t_i^{\nu-2}(t_{j-1}-t_i)^{\alpha-1}\Big)^2
\le C\tau^{2\nu-4} \sum_{i=1}^{n-2} t_i^{2\alpha-2}\,. \]
\end{lemma}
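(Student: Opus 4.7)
The plan is to first reduce the claim to a dimensionless estimate by substituting $t_i=i\tau$ (since $\gamma=1$). This gives $t_i^{\nu-2}(t_{j-1}-t_i)^{\alpha-1}=\tau^{\nu+\alpha-3}\,i^{\nu-2}(j-1-i)^{\alpha-1}$, so after setting $k=j-1$ and dividing through by $\tau^{2\nu+2\alpha-6}=\tau^{2\nu-4}\cdot\tau^{2\alpha-2}$, the lemma becomes
\[
\sum_{k=2}^{n-1}\Big(\sum_{i=1}^{k-1} i^{\nu-2}(k-i)^{\alpha-1}\Big)^{\!2}\le C\sum_{i=1}^{n-2} i^{2\alpha-2},
\]
and the RHS of the lemma follows by $\tau^{2\alpha-2}i^{2\alpha-2}=t_i^{2\alpha-2}$.

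The core of the argument would be a pointwise bound $\sum_{i=1}^{k-1} i^{\nu-2}(k-i)^{\alpha-1}\le Ck^{\alpha-1}$, obtained by splitting the inner sum at $i=\lfloor k/2\rfloor$ and applying each available summability in its favorable regime. For $1\le i\le k/2$, I would use $k-i\ge k/2$ to pull out $(k-i)^{\alpha-1}\le 2^{1-\alpha}k^{\alpha-1}$, and then bound the tail by the convergent series $\sum_{i\ge 1}i^{\nu-2}<\infty$ (which holds precisely because $\nu<1$). For $k/2<i\le k-1$, I would symmetrically pull out $i^{\nu-2}\le 2^{2-\nu}k^{\nu-2}$ and estimate $\sum_{m=1}^{\lfloor k/2\rfloor}m^{\alpha-1}\le Ck^{\alpha}$, which holds because $0<\alpha<1$. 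Adding the two pieces gives $Ck^{\alpha-1}+Ck^{\nu+\alpha-2}\le Ck^{\alpha-1}$, where the final step uses $k^{\nu-1}\le 1$ since $\nu<1$ and $k\ge 1$.

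Squaring and summing in $k$, together with the monotonicity of $k^{2\alpha-2}$ (since $2\alpha-2<0$), would yield
\[
\sum_{k=2}^{n-1} k^{2(\alpha-1)}\le \sum_{i=1}^{n-2} i^{2(\alpha-1)},
\]
which is exactly the reduced inequality above and, after reintroducing the power of $\tau$ and using $t_i=i\tau$, delivers the claim.

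I do not expect any deep obstacle here: the proof is essentially bookkeeping plus the observation that neither $\sum i^{\nu-2}$ nor $\sum m^{\alpha-1}$ alone can be summed on the full range $1\le i\le k-1$, whereas a dyadic split at $k/2$ isolates each exponent in the region where its partial sum is tractable. The only point that requires mild attention is ensuring that the constants $C$ are allowed to depend on $\nu$ and $\alpha$ (they are, per the paper's convention on generic constants) and that the monotonicity-based index shift at the end is carried out correctly for $\alpha<1$.
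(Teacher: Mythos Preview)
Your argument is correct. The reduction to the dimensionless inequality is accurate, the dyadic split at $i=\lfloor k/2\rfloor$ produces the pointwise bound $\sum_{i=1}^{k-1} i^{\nu-2}(k-i)^{\alpha-1}\le Ck^{\alpha-1}$ exactly as you describe, and the final index shift $\sum_{k=2}^{n-1}k^{2\alpha-2}\le\sum_{i=1}^{n-2}i^{2\alpha-2}$ is valid because $t\mapsto t^{2\alpha-2}$ is decreasing.

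The paper, however, proceeds differently: instead of a pointwise estimate on the inner sum, it applies a weighted Cauchy--Schwarz inequality (with weight $i^{\nu-2}$) to obtain
\[
\Big(\sum_{i=1}^{j-2} i^{\nu-2}(j-1-i)^{\alpha-1}\Big)^2\le \Big(\sum_{i=1}^{j-2} i^{\nu-2}\Big)\Big(\sum_{i=1}^{j-2} i^{\nu-2}(j-1-i)^{2\alpha-2}\Big),
\]
bounds the first factor by a constant (since $\nu<1$), reindexes the second factor via $i\mapsto j-1-i$, and then sums over $j$ and exchanges the order of summation. Your route is slightly longer but extracts a genuine pointwise bound on the (unsquared) inner sum, which is sharper information than what the paper's Cauchy--Schwarz step yields; the paper's route is more compact and avoids the case split entirely. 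Both arrive at the same final estimate with constants depending only on $\alpha$ and $\nu$.
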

\begin{proof} An application of the Cauchy-Schwarz inequality gives 
\[\Big(\sum_{i=1}^{j-2} i^{\nu-2}(j-i-1)^{\alpha-1}\Big)^2
\le  \sum_{i=1}^{j-2} i^{\nu-2} (j-i-1)^{2\alpha-2} \sum_{i=1}^{j-2} i^{\nu-2}
\le   C\sum_{i=1}^{j-2} (j-i-1)^{\nu-2} i^{2\alpha-2}\,.
\]
Now, summing over $j,$ and then, changing the order of summations, 
\[\sum_{j=3}^n \Big(\sum_{i=1}^{j-2} i^{\nu-2}(j-i-1)^{\alpha-1}\Big)^2
\le    C\sum_{i=1}^{n-2} i^{2\alpha-2} \sum_{j=i+2}^n  (j-i-1)^{\nu-2}\\
\le    C\sum_{i=1}^{n-2} i^{2\alpha-2}\,. \]
To complete the proof, we use the identities  $i=t_i /\tau$ and $j-1=t_{j-1}/\tau$. 
\end{proof}  

\begin{theorem}\label{thm: IE finalU} 
  Assume that the function $g$ satisfies the first regularity assumption in~\eqref{eq: g reg}.   Let $\check g$ be as defined in~\eqref{eq: g check}. Then, for $0<\alpha<1,$ $\gamma=1$, and for $1\le n\le N,$ we have  
  \[   
    \frac{1}{\tau}\sum_{j=1}^n  \Big|\int_{t_{j-1}}^{t_j} \Ba (g-\check g)(t)\,dt\Big|^2\le  
    C\tau^{2(\alpha+\nu)-1}+C\tau^{2\nu} \sum_{j=1}^{n-2}  \tau\,t_j^{2\alpha-2}\,.
  \]
\end{theorem}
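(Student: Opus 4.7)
The strategy is to use $(g-\check g)(0)=0$ to rewrite $\Ba(g-\check g)(t)=\fInt^\alpha\phi(t)$, where $\phi:=(g-\check g)'$, and then estimate
\[
B_j:=\int_{t_{j-1}}^{t_j}\fInt^\alpha\phi(t)\,dt
\]
by interchanging the order of integration. Fubini yields $B_j=\int_0^{t_j}\phi(s)K_j(s)\,ds$ with the nonnegative kernel
\[
K_j(s)=\int_{\max(s,t_{j-1})}^{t_j}\omega_\alpha(t-s)\,dt,
\]
equal to $\Gamma(\alpha+1)^{-1}[(t_j-s)^\alpha-(t_{j-1}-s)^\alpha]$ for $s<t_{j-1}$ and to $\Gamma(\alpha+1)^{-1}(t_j-s)^\alpha$ for $s\in[t_{j-1},t_j]$. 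I would then split $B_j=\sum_{i=1}^{j}b_{ij}$ with $b_{ij}:=\int_{I_i}\phi K_j\,ds$ and bound each piece separately.

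A mean value estimate on the fractional power gives $|K_j(s)|\le C\tau(t_{j-1}-s)^{\alpha-1}$ for $s<t_{j-1}$, and the direct bound $|K_j(s)|\le C\tau^\alpha$ on $I_j$. For the interpolation error, on each $I_i$ with $i\ge 2$ one application of the mean value theorem gives $|\phi(s)|\le\int_{I_i}|g''(r)|\,dr\le C\tau\,t_{i-1}^{\nu-2}$. Combining these for the ``interior'' indices $2\le i\le j-2$ yields $|b_{ij}|\le C\tau^{3}\,t_{i-1}^{\nu-2}(t_{j-1}-t_i)^{\alpha-1}$, while for the two near-diagonal pieces $i=j-1$ and $i=j$ (with $j\ge 3$) one obtains $|b_{j-1,j}|+|b_{jj}|\le C\tau^{\alpha+2}\,t_{j-2}^{\nu-2}$. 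The boundary interval $i=1$ must be treated differently: using $|g'(r)|\lesssim r^{\nu-1}$ (implicit in the singular behavior of $g''$ together with the continuity of $g$ at $0$) one has $\int_{I_1}|\phi|\,ds\le C\tau^{\nu}$, and for $j\ge 3$ the kernel is essentially constant on $I_1$ (since $s\le\tau\le t_{j-1}/2$), producing $|K_j(s)|\le C\tau\,t_{j-1}^{\alpha-1}$ throughout $I_1$ and hence $|b_{1j}|\le C\tau^{1+\nu}t_{j-1}^{\alpha-1}$.

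To assemble, I would square $|B_j|$ via Cauchy--Schwarz (keeping the interior sum over $2\le i\le j-2$ as one block), divide by $\tau$, and sum over $j\ge 3$. The interior double sum fits exactly the hypothesis of Lemma~\ref{lem: Mink}, producing
\[
\tau^{-1}\sum_{j\ge 3}\Bigl(\sum_{i=2}^{j-2}|b_{ij}|\Bigr)^2\le C\tau^{5}\cdot \tau^{2\nu-4}\sum_i t_i^{2\alpha-2}=C\tau^{2\nu}\sum_i\tau\,t_i^{2\alpha-2}.
\]
The $i=1$ contributions reduce to the same form after an index shift, and the near-diagonal terms sum to $C\tau^{2\alpha+3}\sum_j t_{j-1}^{2\nu-4}\le C\tau^{2(\alpha+\nu)-1}$, using convergence of $\sum_{j\ge 2}j^{2\nu-4}$ for $\nu<1$. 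The cases $j=1,2$ are handled by direct integration, yielding $|B_j|\le C\tau^{\alpha+\nu}$ and hence $|B_j|^2/\tau\le C\tau^{2(\alpha+\nu)-1}$. Collecting the contributions gives the claimed bound.

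The principal obstacle is the first interval $I_1$: the standard piecewise-linear interpolation estimate $\|g-\check g\|_\infty\le\tfrac{\tau^2}{8}\|g''\|_\infty$ fails because $|g''(r)|\sim r^{\nu-2}$ is not integrable on $[0,\tau]$ when $\nu<1$. The remedy is to abandon any max-norm bound on $I_1$ and exploit instead the $L^1$-smallness of $\phi$ there together with the near-constancy of the kernel $K_j$ (valid whenever $j\ge 3$), isolating $j\in\{1,2\}$ as separate cases. A secondary technical point is making sure that in the ``interior'' regime one does not integrate by parts to move a derivative onto $K_j$: doing so would replace $(t_{j-1}-t_i)^{\alpha-1}$ by the more singular $(t_{j-1}-t_i)^{\alpha-2}$, destroying the matching with Lemma~\ref{lem: Mink}.
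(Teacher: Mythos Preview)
Your approach is essentially the paper's: both bound $|(g-\check g)'(s)|$ intervalwise (by $Cs^{\nu-1}$ on $I_1$, by $C\tau\,t_i^{\nu-2}$ on $I_i$ for $i\ge2$), integrate against the fractional kernel, separate the near-diagonal contributions, and feed the interior double sum into Lemma~\ref{lem: Mink}. The only cosmetic difference is that you apply Fubini and work with the integrated kernel $K_j$, whereas the paper bounds $|\fInt^\alpha(g-\check g)'(t)|$ pointwise on each $I_j$ and then integrates; the resulting estimates are identical.

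One point needs correction. Your justification for the $I_1$ bound is wrong: the inequality $|g'(r)|\lesssim r^{\nu-1}$ is \emph{not} a consequence of $|g''(r)|\le c\,r^{\nu-2}$ together with continuity of $g$ at $0$ (take $g(t)=t$, for which $g''\equiv0$ but $g'\equiv1$). What you actually need, and what does follow from the hypothesis on $g''$ alone, is the bound on the \emph{difference}
\[
|\phi(s)|=\Big|g'(s)-\frac{g(\tau)-g(0)}{\tau}\Big|
=\Big|\frac{1}{\tau}\int_0^\tau\!\!\int_q^s g''(z)\,dz\,dq\Big|
\le C\big(s^{\nu-1}+\tau^{\nu-1}\big)\le Cs^{\nu-1},\quad s\in I_1,
\]
since the secant-slope subtraction cancels any bounded part of $g'$. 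This gives $\int_{I_1}|\phi|\,ds\le C\tau^\nu$ as you claimed, and the rest of your argument goes through unchanged.
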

    \begin{proof}
From the definition of $\check g$ in~\eqref{eq: g check}, we observe after some manipulations  that  
     \[(g-\check g)'(s) =\frac{1}{\tau} \int_{t_{i-1}}^{t_i}\int_q^s g''(z)\,dz\,dq,\]
and hence, by the first regularity assumption in \eqref{eq: g reg}, 
\[|(g-\check g)'(s)|\le \frac{1}{\tau}\int_0^{t_1} \int_q^s |g''(z)|\,dz\,dq
\le C\Big(s^{\nu-1}+\tau^{\nu-1}\Big)\le 
C\,s^{\nu-1},~~{\rm for}~~s\in I_1\,.\]
However, for $s \in I_j$ with   $j\ge 2$, we have 
$|(g-\check g)'(s)|\le
     \int_{t_{j-1}}^{t_j}|g''(q)|\,dq
       \le  c\tau t_j^{\nu-2}\,.$ Using these estimates,  
 \[|\fInt^\alpha (g-\check g)'(t)| \le  C
\int_0^{t}(t-s)^{\alpha-1} s^{\nu-1}\,ds\le C t^{\alpha+\nu-1},~~{\rm for}~~t\in I_1,\]
    while, for $t\in I_j$ with $j\ge 2,$
\begin{align*}
|\fInt^\alpha(g-\check g)'(t)| &\le  C\int_0^{t_1}(t-s)^{\alpha-1} s^{\nu-1}\,ds+C\tau\sum_{i=2}^j t_i^{\nu-2} \int_{t_{i-1}}^{\min\{t_i,t\}}
 (t-s)^{\alpha-1}\,ds\\
&\le C (t-t_1)^{\alpha-1} t_1^\nu
+C\tau^2\sum_{i=2}^{j-1} t_i^{\nu-2} 
 (t-t_i)^{\alpha-1}+C\tau t_j^{\nu-2} 
 (t-t_{j-1})^\alpha\\
&\le C\tau^2\sum_{i=1}^{j-2} t_i^{\nu-2} 
 (t-t_i)^{\alpha-1}+
 C\tau^2 t_j^{\nu-2} 
 (t-t_{j-1})^{\alpha-1}\,.
\end{align*}
From the above two achieved bounds, we have   
 \begin{align*}
 \Big(\int_{t_{j-1}}^{t_j}|\fInt^\alpha(g-\check g)'|\,dt\Big)^2 &  \le C\Big(\tau^3\sum_{i=1}^{j-2} t_i^{\nu-2}(t_{j-1}-t_i)^{\alpha-1}+
 \tau^{2+\alpha} t_j^{\nu-2} \Big)^2\\
 &  \le C\tau^6 \Big(\sum_{i=1}^{j-2} t_i^{\nu-2}(t_{j-1}-t_i)^{\alpha-1}\Big)^2+
 C\tau^{4+2\alpha} t_j^{2\nu-4},~ {\rm for}~j\ge 1\,.
\end{align*}
 Summing over $j$ and using Lemma \ref{lem: Mink} yield 
\[ \sum_{j=1}^n \Big|\int_{t_{j-1}}^{t_j} \fInt^\alpha (g-\check g)'(t)\,dt\Big|^2
  \le 
C\tau^{2\nu+2} \sum_{j=1}^{n-2} t_j^{2\alpha-2}+
 C\tau^{3+2\alpha} \sum_{j=1}^n \tau\,t_j^{2\nu-4}\,.\]
Finally, the desired result follows immediately after noting that the second summation  is bounded by $C t_1^{2\nu-3}=C\tau^{2\nu-3}\,.$ 
      \end{proof}
      
  We now turn to the interpolation errors for the case of a graded mesh where  the main result is in Theorem \ref{thm: IE final}.   Relying on  the Taylor series expansions with integral reminder, after  tedious calculations, we observe that   $(g-\check g)'(s) =e_1(s)+e_2(s)$ for $s \in I_i$, where for $i\ge 2$
\[
  e_1(s)=\int_s^{t_i}(q-s)g'''(q)\,dq-\frac{1}{2\tau_i}\int_{t_{i-1}}^{t_i}(q-t_{i-1})^2 g'''(q)\,dq, \quad
 e_2(s) = (s-t_{i-1/2})\, g''(t_i),
\]
while for $i=1$
\[
  e_1(s)=\frac{1}{t_1}\int_0^{t_1} \int_q^s g''(z)\,dz\,dq, \quad e_2(s)=0.
\]
Therefore, 
\begin{equation}\label{eq: IE deompose 0}
    \Big|\int_{t_{j-1}}^{t_j} \Ba (g-\check g)(t)\,dt\Big|\le \Big|\int_{t_{j-1}}^{t_j} \fInt^\alpha e_1(t)\,dt\Big| +\Big|\int_{t_{j-1}}^{t_j} \fInt^\alpha e_2(t)\,dt\Big|.
\end{equation}
The terms on the right-hand side will be estimated in the next two lemmas.
\begin{lemma} \label{lem: IE deompose 1} 
Assume that $\gamma > 2/\nu,$ then for $j\ge 1,$ we have 
\[
  \Big|\int_{t_{j-1}}^{t_j} \fInt^\alpha e_1(t)\,dt\Big| \le C\tau_j\tau^2 t_j^{\nu+\alpha-1-2/\gamma}.
\]
\end{lemma}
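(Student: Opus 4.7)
The plan is to derive a single power-type majorant for $|e_1(s)|$ that is valid on all of $(0,t_j)$, and then evaluate the resulting fractional integral using the Beta function. The hypothesis $\gamma>2/\nu$ will enter in exactly the place where the Beta integral has to converge at $s=0$.

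First I would obtain interval-wise pointwise bounds on $e_1$. For $i\ge2$ and $s\in I_i$, I would insert $|g'''(q)|\le cq^{\nu-3}$ into both terms of the definition of $e_1$. Since $q\in I_i$ and the mesh satisfies $t_i\le2^\gamma t_{i-1}$, one has $q\asymp t_i$, so $q^{\nu-3}$ can be replaced by a constant times $t_i^{\nu-3}$. A direct computation of $\int_s^{t_i}(q-s)\,dq$ and $\int_{t_{i-1}}^{t_i}(q-t_{i-1})^2\,dq$ yields $|e_1(s)|\le C\tau_i^2\,t_i^{\nu-3}$. For $i=1$ the definition involves $g''$, so I would use $|g''(z)|\le cz^{\nu-2}$ and the elementary estimate $\bigl|\int_q^s g''(z)\,dz\bigr|\le\tfrac{c}{1-\nu}\min(q,s)^{\nu-1}$; splitting the integral in $q$ at $q=s$ gives $|e_1(s)|\le Cs^{\nu-1}$ on $I_1$.

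The key reduction step is to unify these two estimates as $|e_1(s)|\le C\tau^2 s^{\nu-1-2/\gamma}$ for all $s\in(0,t_j)$. On $I_i$ with $i\ge2$, I would apply the mesh property $\tau_i\le\gamma\tau\,t_i^{1-1/\gamma}$ from \eqref{eq: mesh property 1} to obtain $\tau_i^2 t_i^{\nu-3}\le C\tau^2 t_i^{\nu-1-2/\gamma}$, and then use $s\asymp t_i$ to replace $t_i$ by $s$ (the implicit constant depending only on $\gamma$). On $I_1$ the inequality $|e_1(s)|\le Cs^{\nu-1}\le C\tau^2 s^{\nu-1-2/\gamma}$ is equivalent to $1\le\tau^2 s^{-2/\gamma}$, which holds because $s\le t_1=\tau^\gamma$ implies $s^{-2/\gamma}\ge\tau^{-2}$. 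So a single power bound covers the entire interval.

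With this unified bound in hand, Fubini gives
\[
\Big|\int_{t_{j-1}}^{t_j}\fInt^\alpha e_1(t)\,dt\Big|
\le \frac{C\tau^2}{\Gamma(\alpha)}\int_{t_{j-1}}^{t_j}\int_0^t(t-s)^{\alpha-1}s^{\nu-1-2/\gamma}\,ds\,dt,
\]
and the inner integral evaluates to $B(\nu-2/\gamma,\alpha)\,t^{\nu+\alpha-1-2/\gamma}$. Here is exactly where the assumption $\gamma>2/\nu$ is used: it ensures $\nu-2/\gamma>0$, so the Beta function is finite. It remains to estimate $\int_{t_{j-1}}^{t_j}t^{\nu+\alpha-1-2/\gamma}\,dt\le C\tau_j t_j^{\nu+\alpha-1-2/\gamma}$. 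For $j\ge2$ this follows by bounding the integrand by its maximum on $[t_{j-1},t_j]$, using $t_{j-1}\ge2^{-\gamma}t_j$ from \eqref{eq: mesh property 1} in the case where the exponent is negative; for $j=1$ one integrates directly and checks that $t_1^{\nu+\alpha-2/\gamma}/(\nu+\alpha-2/\gamma)=\tau_1 t_1^{\nu+\alpha-1-2/\gamma}/(\nu+\alpha-2/\gamma)$, which is of the required form.

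The main obstacle is the reconciliation carried out in step two: because $e_1$ is defined by a different formula on $I_1$ and only $g''$ (not $g'''$) is controlled there, the natural $I_1$ bound $Cs^{\nu-1}$ looks incompatible with the $i\ge2$ bound $C\tau^2 t_i^{\nu-1-2/\gamma}$. The reconciliation works only because the graded mesh makes the first interval very short, with $t_1=\tau^\gamma$, which is precisely what lets $\tau^2 s^{-2/\gamma}$ dominate $1$ on $I_1$; the same assumption $\gamma>2/\nu$ that guarantees convergence of the Beta integral is what makes this estimate nontrivial.
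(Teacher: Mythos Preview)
Your argument is correct. Each step checks out: the interval-wise bounds on $|e_1|$ are accurate, the unification into the single power majorant $|e_1(s)|\le C\tau^2 s^{\nu-1-2/\gamma}$ is valid on both $I_1$ (via $s\le t_1=\tau^\gamma$) and $I_i$ with $i\ge 2$ (via $\tau_i\le\gamma\tau t_i^{1-1/\gamma}$ and $s\asymp t_i$), the Beta integral converges precisely because $\gamma>2/\nu$ forces $\nu-2/\gamma>0$, and the final estimate on $\int_{t_{j-1}}^{t_j}t^{\nu+\alpha-1-2/\gamma}\,dt$ is handled correctly in both sign cases of the exponent and for $j=1$.

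The paper does not give a self-contained proof here: it simply writes ``Following the proof of (Lemma 3.2, Mustapha (2020)) with $\nu$ in place of $\sigma+\alpha/2$, we obtain the desired estimate.'' So you have in fact supplied what the paper omits. Your route through a single unified pointwise majorant and a closed-form Beta evaluation is particularly clean; it avoids an interval-by-interval treatment of the convolution and isolates exactly one place where the hypothesis $\gamma>2/\nu$ is indispensable. Whether this coincides with the argument in the cited reference cannot be judged from the present paper, but your proof stands on its own.
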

\begin{proof} Following the proof of (Lemma 3.2, Mustapha (2020)) with $\nu$ in place of $\sigma+\alpha/2$, we obtain the desired estimate. 
\end{proof}

\begin{lemma} \label{lem: IE deompose 2} 
Assume that $\gamma \ge 2/\nu$, then for $j\ge 1$, we have 
  \[
   \Big| \int_{t_{j-1}}^{t_j} \fInt^\alpha e_2(t)\,dt\Big|  \le  C\tau^2 \tau_j  t_j^{\nu+\alpha-1-2/\gamma}.
  \]
\end{lemma}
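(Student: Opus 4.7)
The plan is to mirror the argument of (Lemma~3.4, Mustapha (2020)) — the direct cousin of Lemma~\ref{lem: IE deompose 1} — with $\nu$ replacing the exponent $\sigma+\alpha/2$ used there. The essential structural observation is the midpoint cancellation $\int_{I_i}(s-t_{i-1/2})\,ds=0$, equivalently, $e_2(s)=\Phi_i'(s)\,g''(t_i)$ on $I_i$ where $\Phi_i(s):=\tfrac{1}{2}(s-t_{i-1})(s-t_i)$ vanishes at both endpoints of $I_i$ and obeys $|\Phi_i(s)|\le\tau_i^2/8$. Fix $t\in I_j$ with $j\ge 2$ (the case $j=1$ is trivial since $e_2\equiv 0$ on $I_1$) and split
\[
\fInt^\alpha e_2(t)=\sum_{i=2}^{j-1}g''(t_i)\int_{I_i}\omega_\alpha(t-s)(s-t_{i-1/2})\,ds+g''(t_j)\int_{t_{j-1}}^t\omega_\alpha(t-s)(s-t_{j-1/2})\,ds.
\]

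For each fully-past interval $I_i$ with $2\le i\le j-1$, I would integrate by parts using $\Phi_i$: the boundary contributions drop out, giving $\int_{I_i}\omega_\alpha(t-s)(s-t_{i-1/2})\,ds=\int_{I_i}\Phi_i(s)\,\omega_\alpha'(t-s)\,ds$. Combining $|\Phi_i|\le\tau_i^2/8$, the pointwise bound $|\omega_\alpha'(r)|\le C\,r^{\alpha-2}$, and a mean value estimate on $\int_{I_i}(t-s)^{\alpha-2}\,ds$ yields the sharp estimate $|\int_{I_i}\omega_\alpha(t-s)(s-t_{i-1/2})\,ds|\le C\tau_i^3(t-t_i)^{\alpha-2}$; the companion direct bound $C\tau_i^2(t-t_i)^{\alpha-1}$ is kept in reserve for $i=j-1$, where $t-t_i$ can be small. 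For the last partial interval, I would use the direct estimate $|e_2(s)|\le(\tau_j/2)|g''(t_j)|$ together with $\int_{t_{j-1}}^t\omega_\alpha(t-s)\,ds\le C(t-t_{j-1})^\alpha$ to deduce $|g''(t_j)\int_{t_{j-1}}^t\omega_\alpha(t-s)(s-t_{j-1/2})\,ds|\le C\tau_j\,t_j^{\nu-2}(t-t_{j-1})^\alpha$.

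Integrating over $t\in I_j$, the partial contribution yields $C\tau_j^{\alpha+2}t_j^{\nu-2}$, while each past contribution becomes $C\tau_i^3 t_i^{\nu-2}\int_{I_j}(t-t_i)^{\alpha-2}\,dt\le C\tau_j\tau_i^3 t_i^{\nu-2}(t_{j-1}-t_i)^{\alpha-2}$ (with the direct bound used for $i=j-1$). Now insert the graded-mesh relation $\tau_i\le\gamma\tau\,t_i^{1-1/\gamma}$ from~\eqref{eq: mesh property 1} to trade factors of $\tau_i$ for factors of $\tau$ times powers of $t_i$, which brings in precisely the exponent $\nu+1-3/\gamma$ on $t_i$. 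Comparing the resulting weighted discrete sum to its continuous counterpart, a Beta-type integral $\int_0^{t_{j-1}}t^{a}(t_{j-1}-t)^{\alpha-2}\,dt$ (or its less singular $\alpha-1$ variant coming from the direct bound), using the mesh density $1/\tau_i\le C/(\tau t_i^{1-1/\gamma})$ to convert $\sum_i$ into $\int$, produces the scaling $t_j^{\nu+\alpha-1-2/\gamma}$; the hypothesis $\gamma\ge 2/\nu$ is exactly what is needed for the integral to converge at the origin and for the claimed exponent of $t_j$ to be achieved.

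The main obstacle is bookkeeping at the transition $i=j-1$, where integration by parts becomes ineffective because $t-t_i$ can be arbitrarily small; one must judiciously select between the two estimates $\tau_i^2(t-t_i)^{\alpha-1}$ and $\tau_i^3(t-t_i)^{\alpha-2}$ on the past intervals and verify the combined sum plus the partial-interval contribution fit within $C\tau^2\tau_j\,t_j^{\nu+\alpha-1-2/\gamma}$ without acquiring a spurious factor such as $\tau^{\alpha-1}$. Rigorously identifying the critical mesh threshold $\gamma\ge 2/\nu$ — which simultaneously makes the Beta-type integral convergent and produces the exact power of $t_j$ — is the technical heart of the argument.
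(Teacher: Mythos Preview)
Your proposal is correct and follows exactly the same approach as the paper: both invoke Lemma~3.4 of Mustapha (2020) with $\nu$ in place of $\sigma+\alpha/2$, after disposing of the trivial case $j=1$ where $e_2\equiv 0$. Your outline of the midpoint cancellation, integration-by-parts via $\Phi_i$, and graded-mesh bookkeeping is a faithful unpacking of what that cited lemma does; the paper simply cites it without detail.
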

\begin{proof}
For $j=1,$ we have nothing to show because $e_2(t)=0.$ For $j\ge 2,$  following the proof of (Lemma 3.4, Mustapha (2020)) with $\nu$ in place of $\sigma+\alpha/2$ yields the desired bound. 
\end{proof}

By the decomposition in~\eqref{eq: IE deompose 0}, the estimates in Lemmas~\ref{lem: IE deompose 1} and~\ref{lem: IE deompose 2}, the use of the first  mesh property  in~\eqref{eq: mesh property 1},   and the Cauchy-Schwarz inequality, we observe that for $\gamma>2/\nu$,  
\[  \Big|\int_{t_{j-1}}^{t_j} \Ba (g-\check g)(t)\,dt\Big|^2\le C\Big(\tau^2 \int_{t_{j-1}}^{t_j} t^{\nu+\alpha-1-2/\gamma}\,dt\Big)^2\\
  \le C\tau^4 \tau_j \int_{t_{j-1}}^{t_j} t^{2(\nu+\alpha-1-2/\gamma)}\,dt.
\]
Dividing both side by $\tau_j$, then  summing over $j,$ we obtain the main interpolation error result (over a graded mesh)  in next theorem.  
\begin{theorem}\label{thm: IE final} 
  Assume that  $g$ satisfies the regularity assumption in~\eqref{eq: g reg}.   Let $\check g$ be as defined in~\eqref{eq: g check}. Then, for $0<\alpha<1,$  $\gamma>2/\nu$, and for $1\le n\le N,$ we have  
  \[   
    \sum_{j=1}^n \frac{1}{\tau_j} \Big|\int_{t_{j-1}}^{t_j} \Ba (g-\check g)(t)\,dt\Big|^2\le   C\tau^4\int_{t_1}^{t_n}   t^{2(\nu+\alpha-1-2/\gamma)}dt\le C\tau^4.
  \]
\end{theorem}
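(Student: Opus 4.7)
The plan is to execute the outline spelled out in the paragraph immediately preceding the theorem statement, paying attention to the delicate case $j=1$. First, combining Lemmas~\ref{lem: IE deompose 1} and~\ref{lem: IE deompose 2} with the decomposition~\eqref{eq: IE deompose 0} yields, uniformly for $1\le j\le n$, the pointwise bound
\[
\left|\int_{t_{j-1}}^{t_j} \Ba (g-\check g)(t)\,dt\right|\le C\tau^2\,\tau_j\,t_j^{\nu+\alpha-1-2/\gamma}.
\]

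Next, for $j\ge 2$ I would invoke the first inequality in~\eqref{eq: mesh property 1}, namely $t_j\le 2^\gamma t_{j-1}$, which makes $t_j^{\nu+\alpha-1-2/\gamma}$ comparable (up to a $\gamma$-dependent constant) to $t^{\nu+\alpha-1-2/\gamma}$ uniformly on $[t_{j-1},t_j]$, irrespective of the sign of the exponent. This permits the replacement $\tau_j\,t_j^{\nu+\alpha-1-2/\gamma}\le C\int_{t_{j-1}}^{t_j} t^{\nu+\alpha-1-2/\gamma}\,dt$. A Cauchy--Schwarz inequality then squares this integral at the cost of an extra factor $\tau_j$; dividing by $\tau_j$ cancels the prefactor, and summing over $j\ge 2$ telescopes the subinterval integrals into the single integral $\int_{t_1}^{t_n} t^{2(\nu+\alpha-1-2/\gamma)}\,dt$ appearing in the target bound.

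The subtle step is $j=1$, where the mesh comparison $t_j\sim t_{j-1}$ breaks down at $t_0=0$. Here I would argue directly: since $e_2\equiv 0$ on $I_1$ and $\tau_1=t_1=\tau^\gamma$, the contribution $\tau_1^{-1}|\int_0^{t_1}\fInt^\alpha e_1(t)\,dt|^2$ is bounded by $\tau_1\tau^4 t_1^{2(\nu+\alpha-1-2/\gamma)}=C\tau^{\gamma(2(\nu+\alpha)-1)}$, and the standing hypothesis $\gamma>2/\nu$ (together with the implicit regime $\alpha>1/2$ in which the theorem is used) forces $\gamma(2(\nu+\alpha)-1)\ge 4$, so this term is separately absorbed into the constant on the right-hand side.

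Finally, the second inequality $\int_{t_1}^{t_n} t^{2(\nu+\alpha-1-2/\gamma)}\,dt\le C$ amounts to checking that the exponent satisfies $2(\nu+\alpha-1-2/\gamma)>-1$, equivalently $\nu+\alpha-2/\gamma>1/2$; this follows because $\gamma>2/\nu$ implies $2/\gamma<\nu$, so the left-hand side exceeds $\alpha$, which is at least $1/2$ in the regime of interest. The main obstacle in carrying out the plan is the careful bookkeeping at the boundary index $j=1$, and the need to verify that passage from $\tau_j t_j^p$ to $\int_{t_{j-1}}^{t_j} t^p\,dt$ via the mesh property is valid regardless of the sign of $p=\nu+\alpha-1-2/\gamma$; the heavy technical work has already been carried out inside Lemmas~\ref{lem: IE deompose 1} and~\ref{lem: IE deompose 2}.
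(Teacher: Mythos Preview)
Your proposal is correct and follows essentially the same route as the paper: combine the decomposition~\eqref{eq: IE deompose 0} with Lemmas~\ref{lem: IE deompose 1} and~\ref{lem: IE deompose 2}, invoke the first mesh property in~\eqref{eq: mesh property 1} to pass from $\tau_j t_j^{\nu+\alpha-1-2/\gamma}$ to $\int_{t_{j-1}}^{t_j} t^{\nu+\alpha-1-2/\gamma}\,dt$, apply Cauchy--Schwarz, divide by $\tau_j$, and sum. Your more explicit handling of the index $j=1$ and of the final inequality is a welcome refinement; the paper's paragraph preceding the theorem glosses over both points.

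One remark on scope: the theorem is \emph{stated} for $0<\alpha<1$, yet you invoke $\alpha>1/2$ to absorb the $j=1$ contribution and to guarantee $2(\nu+\alpha-1-2/\gamma)>-1$ so that $\int_{t_1}^{t_n} t^{2(\nu+\alpha-1-2/\gamma)}\,dt$ stays bounded as $t_1\to 0$. You are right that this extra hypothesis is needed for the last inequality in the display to hold under the sole assumption $\gamma>2/\nu$ (since $\gamma>2/\nu$ only gives $\nu-2/\gamma>0$, hence $\nu+\alpha-2/\gamma>\alpha$, and one must have $\alpha>1/2$ to exceed~$1/2$). The paper's own argument is silent on this point and is applied only in Theorem~\ref{thm: time convergence}, where $1/2<\alpha<1$ is assumed, so your restriction matches the paper's actual usage even though it narrows the stated range.
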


\section{Error analysis}\label{sec: error analysis}\label{sec: error}
In this section, we study the error bounds from the L1 time-stepping scheme~\eqref{fully}. 
Let us denote $\check u$ the piecewise linear polynomial interpolation of the solution $u$ of~\eqref{eq: ibvp} at the time nodes, that is,  
 $$\check u(t)=\frac{t_j-t}{\tau_j} u(t_{j-1})+\frac{t-t_{j-1}}{\tau_j} u(t_j),\quad{\rm  for}~~t_{j-1}\le t\le t_j,\quad{\rm with}~~1\le j\le N\,.$$ 
We introduce the following notations:
\[ 
	\Phi(t)=U(t)- \check u(t)\quad \mbox{and } \Psi(t)=u(t)- \check u(t).
\]
The next two lemmas form the foundation for estimating optimally $\|\Phi(t_n)\|$ (see the convergence results in Theorem~\ref{thm: time convergence}). To this end, we use the stability bound in Lemma~\ref{lem: stab 1} as well as the interpolation error estimate of Theorems \ref{thm: IE finalU} and \ref{thm: IE final}.   

\begin{lemma} \label{lem: error 1} 
For $1/2<\alpha < 1$ and for $1\le n\le N,$ the function $\Phi$ satisfies the following bound 
\[
 		\int_0^{t_n}\iprod{\Phi',\fInt^\alpha \Phi'}\,dt \le C\sum_{j=1}^n \frac{1}{\tau_j}\Big(\Big\|\int_{t_{j-1}}^{t_j} \partial_t^{1-\alpha} \Psi\,dt\Big\|_1^2 + \Big\|\int_{t_{j-1}}^{t_j} (\vF-\overline \vF)\partial_t^{1-\alpha} u \,dt\Big\|^2\Big).
\]
\end{lemma}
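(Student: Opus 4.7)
The plan is to reduce the claim to the already proved stability estimate of Lemma~\ref{lem: stab 1} by deriving an ``error equation'' for $\Phi$ that fits the compact form~\eqref{eq: scheme W}. First, I would integrate the continuous problem~\eqref{eq: ibvp} over the subinterval $I_n$ and divide by $\tau_n$; since $\check u'(t)=[u(t_n)-u(t_{n-1})]/\tau_n$ for $t\in I_n$, this gives
\[
\check u'(t)-\frac{1}{\tau_n}\int_{t_{n-1}}^{t_n}\vnabla\cdot\bigl(\partial_s^{1-\alpha}(\kappa_\alpha\vnabla u)-\vF\,\partial_s^{1-\alpha}u\bigr)(s)\,ds=\bar f^n,\qquad t\in I_n.
\]
Subtracting this from~\eqref{fully} and using $U-u=\Phi-\Psi$ together with the splitting
\[
\vF^{n-\tfrac12}\partial_s^{1-\alpha}U-\vF\,\partial_s^{1-\alpha}u
=\vF^{n-\tfrac12}\partial_s^{1-\alpha}(\Phi-\Psi)+(\vF^{n-\tfrac12}-\vF)\,\partial_s^{1-\alpha}u,
\]
the $\bar f^n$ contributions cancel, leaving an equation for $\Phi$ of the same shape as the scheme for $W$ in Section~\ref{sec: stability}.

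Next, since $\Phi(0)=U(0)-u(0)=0$, we have $\partial_s^{1-\alpha}\Phi=\fInt^\alpha\Phi'$, and setting
\[
\tilde\phi^n=\frac{1}{\tau_n}\int_{t_{n-1}}^{t_n}\fInt^\alpha\Phi'(s)\,ds,\qquad
\tilde\vpsi^n=-\frac{1}{\tau_n}\int_{t_{n-1}}^{t_n}\bigl[\partial_s^{1-\alpha}(\kappa_\alpha\vnabla\Psi)-\vF^{n-\tfrac12}\partial_s^{1-\alpha}\Psi+(\vF^{n-\tfrac12}-\vF)\partial_s^{1-\alpha}u\bigr]\,ds,
\]
the error equation reads
\[
\Phi'(t)-\vnabla\cdot\bigl(\kappa_\alpha\vnabla\tilde\phi^n-\vF^{n-\tfrac12}\tilde\phi^n\bigr)=\vnabla\cdot\tilde\vpsi^n,\qquad t\in I_n,
\]
which is exactly the compact form~\eqref{eq: scheme W} with zero right-hand source ($\bar f\equiv 0$) and with $\tilde\vpsi$ in place of $\vpsi$.

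I would then invoke Lemma~\ref{lem: stab 1} verbatim to obtain
\[
\int_0^{t_n}\iprod{\Phi',\fInt^\alpha\Phi'}\,dt\le C\sum_{j=1}^{n}\frac{1}{\tau_j}\|\tau_j\tilde\vpsi^j\|^2.
\]
The final step is to bound $\|\tau_j\tilde\vpsi^j\|^2$ by the two terms in the claimed estimate. Since $\kappa_\alpha$ and $\vF^{j-\tfrac12}$ are time-independent on $I_j$ and uniformly bounded, we can pull $\kappa_\alpha\vnabla$ and $\vF^{j-\tfrac12}$ outside the time integral, so the first two contributions in $\tau_j\tilde\vpsi^j$ are controlled by $C\,\bigl\|\int_{t_{j-1}}^{t_j}\partial_t^{1-\alpha}\Psi\,dt\bigr\|_1^2$ (the $H^1$ norm is needed to absorb the gradient coming from the $\kappa_\alpha\vnabla\Psi$ term), while the third contribution is exactly $\bigl\|\int_{t_{j-1}}^{t_j}(\vF-\overline\vF)\partial_t^{1-\alpha}u\,dt\bigr\|^2$.

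The main conceptual obstacle is only the bookkeeping in the first paragraph: carefully identifying that the PDE residual decomposes into a ``$\Phi$-part'' (producing the compact form on the left) and a ``$\Psi$-part plus force-inconsistency part'' (grouped into $\tilde\vpsi^n$), together with checking that $\Phi(0)=0$ so that the fractional-derivative identities that power Lemma~\ref{lem: stab 1} remain available. Once this reduction is in place, the remaining work is direct application of earlier results and the triangle inequality.
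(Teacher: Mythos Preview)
Your proposal is correct and follows essentially the same approach as the paper: derive the error equation for $\Phi$ by subtracting the time-averaged continuous problem from the scheme, recognize that this equation has the same structure as~\eqref{eq: scheme W} (the paper phrases this as ``replace $U$ by $\Phi$ and $f$ by the divergence of the $\Psi$-terms in~\eqref{fully}'', while you match~\eqref{eq: scheme W} with $\bar f\equiv 0$ and $\tilde\vpsi$ in place of $\vpsi$---equivalent viewpoints), and then re-run the proof of Lemma~\ref{lem: stab 1} using $\Phi(0)=0$. The only caveat is that Lemma~\ref{lem: stab 1} cannot be quoted ``verbatim'' as a black box (it is stated for the specific $W$, $\bar f$, $\vpsi$); rather, as the paper puts it, one \emph{adopts the proof}, whose penultimate inequality $\int_0^{t_n}\iprod{W',\fInt^\alpha W'}\,dt\le C\int_0^{t_n}(\|\bar f\|^2+\|\vpsi\|^2)\,dt$ depends only on the structure of~\eqref{eq: scheme W}.
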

\begin{proof}
	Integrating \eqref{eq: ibvp} over $I_n$ and then subtracting  from  the time-stepping  numerical scheme in ~\eqref{fully}, and using the identity  $U-u=\Phi-\Psi,$ we get for $t\in I_n$ and $1\le n\le N$: 
	\begin{multline*}
		\Phi'(t)- \frac{1}{\tau_n}\int_{t_{n-1}}^{t_n} \vnabla \cdot\Big(\kappa_\alpha \partial_s^{1-\alpha}\vnabla \Phi- \overline \vF\partial_s^{1-\alpha} \Phi \Big)\,ds \cr 
		= - \frac{1}{\tau_n}\int_{t_{n-1}}^{t_n}\vnabla\cdot\Big(\kappa_\alpha \partial_s^{1-\alpha}\vnabla \Psi - \overline \vF\partial_s^{1-\alpha}  \Psi-(\vF-\overline \vF)\partial_s^{1-\alpha} u\Big)\,ds.
\end{multline*}
Note that the previous equation can be obtained by replacing $U$ with $\Phi$ and  $f$ with  $-\vnabla\cdot\Big(\kappa_\alpha \partial_s^{1-\alpha}\vnabla \Psi - \overline \vF\partial_s^{1-\alpha}  \Psi-(\vF-\overline \vF)\partial_s^{1-\alpha} u\Big)$ in~\eqref{fully}. 
Then, by adopting the proof of the stability result in  Lemma~\ref{lem: stab 1} and using the identity $\Phi(0)=0,$ we obtain 
\[ 	\int_0^{t_n}\iprod{\Phi',\fInt^\alpha \Phi'}\,dt \le   C\sum_{j=1}^n \frac{1}{\tau_j}\Big\|\int_{t_{j-1}}^{t_j} \Big[\kappa_\alpha \partial_t^{1-\alpha}\vnabla \Psi - \overline \vF\partial_t^{1-\alpha}  \Psi-(\vF-\overline \vF)\partial_t^{1-\alpha} u\Big] \,dt\Big\|^2.
\]
Therefore,  the desired result follows immediately.  
\end{proof}

\begin{lemma} \label{lem: error 3} Assume that the first regularity assumption in \eqref{eq: regularity2} is satisfied. For $0<\alpha < 1$ and for $\gamma \ge 1$, we have  
\[ 
	\sum_{j=1}^n \frac{1}{\tau_j} \Big\|\int_{t_{j-1}}^{t_j} \left[\vF-\overline \vF\right]\partial_t^{1-\alpha} u \,dt \Big\|^2 \le C \tau^{(1+2\alpha)\gamma}+C\tau^4\int_{t_1}^{t_n} t^{2\alpha-4/\gamma}\,dt,~~{\rm for}~~ 1\le n\le N\,.
\]
\end{lemma}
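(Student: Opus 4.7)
Let me denote $g=\partial_t^{1-\alpha}u$ so that assumption \eqref{eq: regularity2} gives $\|g(t)\|\le Mt^{\alpha-1}$ and $\|g'(t)\|\le Mt^{\alpha-2}$. On $I_n$, $(\vF-\overline\vF)(t)=\vF(t)-\tfrac12(\vF(t_{n-1})+\vF(t_n))$. I would split the sum as $n=1$ versus $n\ge 2$. For $n=1$, Lipschitz continuity of $\vF$ in time gives $|(\vF-\overline\vF)(t)|\le Ct_1$ on $I_1=(0,t_1)$, and combining with $\|g(t)\|\le Mt^{\alpha-1}$ yields
\[
\Big\|\int_0^{t_1}(\vF-\overline\vF)\,g\,dt\Big\|\le Ct_1\!\int_0^{t_1}t^{\alpha-1}\,dt\le C\,t_1^{1+\alpha}.
\]
Dividing the squared norm by $\tau_1=t_1=\tau^\gamma$ produces the first contribution $C\,t_1^{1+2\alpha}=C\,\tau^{(1+2\alpha)\gamma}$.

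For $n\ge 2$, the key idea is to decompose $\vF=L^n+r^n$ on $I_n$, where $L^n(t)=\vF(t_{n-1})+\tfrac{t-t_{n-1}}{\tau_n}(\vF(t_n)-\vF(t_{n-1}))$ is the linear interpolant at the endpoints, so that $|r^n(t)|\le C\tau_n^2$ with $r^n$ vanishing at $t_{n-1}$ and $t_n$. Since $L^n(t_{n-1/2})=\tfrac12(\vF(t_{n-1})+\vF(t_n))=\overline\vF(t\in I_n)$, one obtains the identity
\[
(\vF-\overline\vF)(t)=(t-t_{n-1/2})\,\delta^n\vF+r^n(t),\qquad\delta^n\vF=\frac{\vF(t_n)-\vF(t_{n-1})}{\tau_n},
\]
with $|\delta^n\vF|\le\|\vF'\|_\infty$. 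Because $\int_{I_n}(t-t_{n-1/2})\,dt=0$, I would subtract $g(t_{n-1/2})$ and use $g(t)-g(t_{n-1/2})=\int_{t_{n-1/2}}^t g'(s)\,ds$ to bound the linear-part contribution by $\tfrac{\tau_n^3}{12}\max_{I_n}\|g'\|\le C\tau_n^3 t_n^{\alpha-2}$, where the mesh property $t_n\le 2^\gamma t_{n-1}$ is used to pass from $t_{n-1}^{\alpha-2}$ to $t_n^{\alpha-2}$. The remainder contributes $\|\int_{I_n}r^n g\,dt\|\le C\tau_n^2\int_{I_n}\|g\|\,dt\le C\tau_n^3 t_n^{\alpha-1}$. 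Together,
\[
\frac{1}{\tau_n}\Big\|\int_{I_n}(\vF-\overline\vF)g\,dt\Big\|^2\le C\tau_n^5(t_n^{2\alpha-2}+t_n^{2\alpha-4})\le C\tau_n^5 t_n^{2\alpha-4},
\]
after absorbing $t_n^2\le T^2$ into $C$.

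Applying the mesh property $\tau_n\le\gamma\tau\,t_n^{1-1/\gamma}$ four times turns $\tau_n^4 t_n^{2\alpha-4}$ into $\le C\tau^4 t_n^{2\alpha-4/\gamma}$, so the per-interval bound becomes $C\tau^4\,\tau_n\,t_n^{2\alpha-4/\gamma}$. Summing over $n\ge 2$ and again using $t_n\le 2^\gamma t_{n-1}$ (to compare $t_n^{2\alpha-4/\gamma}$ with the average of $t^{2\alpha-4/\gamma}$ over $I_n$, regardless of the sign of the exponent) converts the Riemann sum into $C\tau^4\int_{t_1}^{t_n}t^{2\alpha-4/\gamma}\,dt$; combining with the $n=1$ piece finishes the proof. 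The main obstacle is producing the $\tau_n^3$ factor in the middle step: the naive estimate $|\vF-\overline\vF|\le C\tau_n$ paired with $\int_{I_n}\|g\|\,dt\le C\tau_n t_n^{\alpha-1}$ only delivers $C\tau_n^2 t_n^{\alpha-1}$, which is one power of $\tau_n$ short and cannot be recast as $C\tau^4\int t^{2\alpha-4/\gamma}dt$. Recovering this extra power relies on the symmetry $\int_{I_n}(t-t_{n-1/2})dt=0$ together with the Taylor identity for $g$, which is precisely why assumption \eqref{eq: regularity2} needs to bound $g'=\partial_t^{2-\alpha}u$ and not merely $g$ itself.
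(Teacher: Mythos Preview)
Your proof is correct and follows the same overall strategy as the paper: on each $I_j$ write $\vF-\overline\vF$ as a term linear in $(t-t_{j-1/2})$ plus an $O(\tau_j^2)$ remainder, handle the remainder directly against $\|\partial_t^{1-\alpha}u\|\le Mt^{\alpha-1}$, and squeeze an extra factor of $\tau_j$ out of the linear piece using the bound on $\partial_t^{2-\alpha}u$. The only difference is in how that extra factor is obtained: the paper integrates $(t-t_{j-1/2})\,\partial_t^{1-\alpha}u$ by parts with the antiderivative $\tfrac12(t-t_{j-1})(t-t_j)$ (which vanishes at both endpoints, so the boundary term drops and one is left with $\tau_j^2\int_{I_j}\|\partial_t^{2-\alpha}u\|$), whereas you exploit $\int_{I_n}(t-t_{n-1/2})\,dt=0$ to subtract the midpoint value $g(t_{n-1/2})$ and then Taylor-expand $g$; both devices consume exactly the same regularity input and deliver the same per-interval bound $C\tau_j^3 t_j^{\alpha-2}$.
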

\begin{proof}
An elementary calculation shows that  
\[
	\vF(t)-\overline \vF(t) =  (t-t_{j-1/2})\vF'(t_{j-1})-\frac{1}{2}\int_{t_{j-1}}^{t_j} (t_j-s)\vF''(s)ds+\int_{t_{j-1}}^t (t-s)\vF''(s)ds,
\]
for $t \in I_j,$ where $t_{j-1/2}=(t_{j-1}+t_j)/2.$ Then, integration by parts and using the first regularity assumption in \eqref{eq: regularity2} give 
\begin{align*}
  	\int_{t_{j-1}}^{t_j} [\vF(t)-\overline \vF(t)]\partial_t^{1-\alpha} u(t) \,dt = \frac{ \vF'(t_{j-1})}{2}\int_{t_{j-1}}^{t_j} (t-t_{j-1})(t-t_j)\partial_t^{2-\alpha} u(t)\,dt\cr
 	+ \int_{t_{j-1}}^{t_j} \Big(\int_{t_{j-1}}^t (t-s)\vF''(s)ds -\frac{1}{2}\int_{t_{j-1}}^{t_j} (t_j-s)\vF''(s)ds\Big) \partial_t^{1-\alpha} u(t)\,dt,
 \end{align*}
and again, by the first regularity assumption in~\eqref{eq: regularity2} and the second mesh property in~\eqref{eq: mesh property 1}, it comes
\begin{align*}
 	\Big\|\int_{t_{j-1}}^{t_j} [\vF-\overline \vF]\partial_t^{1-\alpha} u \,dt\Big\|\le  C \,\tau_j^2 \int_{t_{j-1}}^{t_j} \Big(\|\partial_t^{2-\alpha} u\|+\|\partial_t^{1-\alpha} u\| \Big) \,dt\cr
 	\le  C   \int_{t_{j-1}}^{t_j} \tau_j^2 \Big( t^{\alpha-2} + t^{\alpha-1} \Big) \,dt
 	\le C\tau^2 \int_{t_{j-1}}^{t_j}  t^{\alpha-2/\gamma} \,dt,\quad \mbox{for } j\ge 2.
 \end{align*}
For $j=1$, similar arguments lead to  
\[
	\Big\|\int_0^{t_1} [\vF-\overline \vF]\partial_t^{1-\alpha} u \,dt \Big\| \le C \tau_1 \int_0^{t_1}  [t\|\partial_t^{2-\alpha} u(t)\|+\|\partial_t^{1-\alpha} u(t)\|] \,dt\le C \tau_1^{\alpha+1}.
\]
Gathering the above estimates completes the proof.
\end{proof}

 We are now ready to estimate the error of our $L1$ time-stepping scheme. An $O(\tau^{\sigma+\alpha/2})$-rate of convergence over a uniform time mesh (that is, $\gamma=1$) is proved,  where $\sigma$ is the regularity exponent occurring in~\eqref{eq: regularity2}. Furthermore, an optimal $O(\tau^2)$-rate of convergence  is achieved   for  the time mesh exponent $\gamma>2/(\sigma+\alpha/2)$. Thus, one should expect  $O(\tau^{\gamma(\sigma+\alpha/2)})$-rates for $1\le \gamma \le  2/(\sigma+\alpha/2))$.  
\begin{theorem}\label{thm: time convergence} 
	Let $1/2<\alpha < 1$, $U$ be the  solution of~\eqref{fully} and $u$ be the solution of problem~\eqref{eq: ibvp}. Assume that $u$  satisfies the regularity assumption in ~\eqref{eq: regularity2} for some  $0<\sigma\le \alpha$.  Then, for $1\le n\le N$,   
	\[
		\|U(t_n)-u(t_n)\| \le  C \times \begin{cases} \tau^{\sigma+\alpha/2},\quad &{\rm if}~~\gamma=1,\\
	\tau^2,\quad &{\rm if}~~\gamma>2/(\sigma+\alpha/2)\,.\end{cases} \]
\end{theorem}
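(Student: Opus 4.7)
The plan is to reduce the problem to bounding $\Phi(t_n)$ and then feed the resulting integral into the interpolation machinery of Section~\ref{sec: fractional interplation}. The starting observation is that $\check u$ interpolates $u$ at the mesh nodes, so $\Psi(t_n)=0$ and therefore $U(t_n) - u(t_n) = \Phi(t_n)$. Because $\Phi(0)=0$ and $\Phi$ is continuous piecewise linear in $t$, I apply Lemma~\ref{lem: pointwise bound} (with $\nu=\alpha$) exactly as in the proof of Theorem~\ref{thm: stability} to obtain
\[
\|\Phi(t_n)\|^2 \le C\,t_n^{1-\alpha}\int_0^{t_n}\iprod{\Phi',\fInt^\alpha \Phi'}\,dt,
\]
and then invoke Lemma~\ref{lem: error 1} to replace the right-hand side by the two interpolation-residual sums it involves.

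Next I estimate the two residual sums. For the first, $\Psi(0)=0$ gives $\partial_t^{1-\alpha}\Psi = \fInt^\alpha \Psi' = \Ba(u-\check u)$, and the second bound in~\eqref{eq: regularity2} reads $\|u''(t)\|_1 + t\|u'''(t)\|_1 \le M\,t^{\sigma+\alpha/2-2}$, which is precisely~\eqref{eq: g reg} with $\nu = \sigma + \alpha/2$ when $g(t) = u(\cdot,t)$ is read as an $H^1(\Omega)$-valued function. The proofs of Theorems~\ref{thm: IE finalU} and~\ref{thm: IE final} rely only on pointwise-in-$t$ size estimates of the relevant derivatives, so they transfer verbatim to this $H^1$-valued setting: Theorem~\ref{thm: IE finalU} applies for $\gamma=1$ and Theorem~\ref{thm: IE final} for $\gamma > 2/(\sigma+\alpha/2)$. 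For the second residual, involving $(\vF - \overline\vF)\,\partial_t^{1-\alpha}u$, I apply Lemma~\ref{lem: error 3} directly.

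It then remains to combine these bounds. For $\gamma=1$, Theorem~\ref{thm: IE finalU} delivers $C\tau^{3\alpha+2\sigma-1} + C\tau^{2\sigma+\alpha}\sum_{j=1}^{n-2}\tau\,t_j^{2\alpha-2}$; the hypothesis $\alpha > 1/2$ makes $2\alpha-2 > -1$, so the discrete sum is dominated by a constant multiple of $t_n^{2\alpha-1}$. Lemma~\ref{lem: error 3} adds $C\tau^{1+2\alpha}$ and, from its integral term, a contribution of order $\tau^4\cdot\tau^{2\alpha-3} = \tau^{2\alpha+1}$. After multiplying by $t_n^{1-\alpha}$ and taking square roots, all terms collapse to $C\tau^{\sigma+\alpha/2}$, since $\sigma\le\alpha\le 1$ forces $3\alpha+2\sigma-1 \ge 2\sigma+\alpha$ and $1+2\alpha \ge 2\sigma+\alpha$. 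For $\gamma > 2/(\sigma+\alpha/2)$, Theorem~\ref{thm: IE final} gives $C\tau^4$ outright for the first sum, so combined with the uniformly bounded factor $t_n^{1-\alpha}$ it yields the $\tau^2$ rate after the square root.

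The one arithmetic obstacle, and the place where care is needed, is verifying that the Lemma~\ref{lem: error 3} contribution is also of order $\tau^4$ when $\gamma > 2/(\sigma+\alpha/2)$. For the leading term $\tau^{(1+2\alpha)\gamma}$, the chain $\gamma > 4/(2\sigma+\alpha) \ge 4/(1+2\alpha)$ (using $2\sigma \le 2\alpha \le 1+\alpha$) gives $(1+2\alpha)\gamma \ge 4$. For the integral term $\tau^4 \int_{t_1}^{t_n} t^{2\alpha-4/\gamma}\,dt$, either $2\alpha-4/\gamma \ge -1$ and the integral is at most a constant times $t_n$, or the lower endpoint $t_1=\tau^\gamma$ dominates and the integral is of order $\tau^{\gamma(2\alpha+1)-4}$, yielding total contribution $\tau^{\gamma(1+2\alpha)}$, again $O(\tau^4)$ by the same inequality. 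This closes both cases and completes the proof.
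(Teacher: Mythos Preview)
Your proposal is correct and follows essentially the same route as the paper's proof: Lemma~\ref{lem: pointwise bound} to pass from $\|\Phi(t_n)\|^2$ to $\int_0^{t_n}\iprod{\Phi',\fInt^\alpha\Phi'}\,dt$, then Lemma~\ref{lem: error 1} to reduce to the two residual sums, then Theorems~\ref{thm: IE finalU}/\ref{thm: IE final} (with $\nu=\sigma+\alpha/2$) for the $\Psi$-sum and Lemma~\ref{lem: error 3} for the $(\vF-\overline\vF)$-sum, with the same arithmetic to close. The only cosmetic difference is that for the integral term in Lemma~\ref{lem: error 3} the paper observes directly that $\gamma>2/(\sigma+\alpha/2)$ together with $\sigma\le\alpha$ forces $2\alpha-4/\gamma>-\alpha>-1$, so your second case never actually occurs and the split is unnecessary.
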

\begin{proof}  An application of Lemma~\ref{lem: pointwise bound} yields $t_n^{\alpha-1}\|\Phi(t_n)\|^2 \le C \int_0^{t_n}\iprod{\Phi',\fInt^\alpha \Phi'}\,dt$. Combining this estimate with the estimate in Lemma~\ref{lem: error 1}, we deduce that
\[
	t_n^{\alpha-1} \|\Phi(t_n)\|^2 \le  C\sum_{j=1}^n \frac{1}{\tau_j}\Big(\Big\|\int_{t_{j-1}}^{t_j} \partial_t^{1-\alpha} \Psi\,dt\Big\|_1^2 +\Big\|\int_{t_{j-1}}^{t_j} (\vF-\overline \vF)\partial_t^{1-\alpha} u \,dt\Big\|^2\Big).
\]
 Applying  Theorems~\ref{thm: IE finalU}  and \ref{thm: IE final} with $u$ in place of $g$ and with $\nu=\sigma+\alpha/2$ owing to the regularity assumption in~\eqref{eq: regularity2}, and using that $1/2<\alpha < 1$, give
\[	\sum_{j=1}^n \frac{1}{\tau_j} \Big\|\int_{t_{j-1}}^{t_j} \partial_t^{1-\alpha} \Psi \,dt\Big\|_1^2 
	 \le Ct_n^{2\alpha-1}\times \begin{cases} \tau^{2\sigma+\alpha},&\quad\mbox{for } ~~\gamma=1,\\
	 \tau^4  t_n^{\alpha+2\sigma-4/\gamma},&\quad\mbox{for }~~\gamma > 2/(\sigma+\alpha/2).\end{cases}
\]
On the other hand, by  the achieved estimate in  Lemma~\ref{lem: error 3} and the inequalities $0<\sigma\le \alpha$ and $2\alpha-4/\gamma > -\alpha$ (this inequality follows from the assumptions  $\gamma> 2/(\sigma+\alpha/2)$ and $0<\sigma\le \alpha$), we have    
\[ 
	\sum_{j=1}^n \frac{1}{\tau_j} \Big\|\int_{t_{j-1}}^{t_j} \left[\vF-\overline \vF\right]\partial_t^{1-\alpha} u \,dt \Big\|^2
	 \le C\times \begin{cases}  \tau^{1+2\alpha}\le \tau^{2\sigma+\alpha},&~\mbox{for } ~\gamma=1,\\
 \tau^4+\tau^4 t_n^{2\alpha-4/\gamma+1},&~\mbox{for }~\gamma > 2/(\sigma+\alpha/2).\end{cases}
\]
 Gathering the above estimates  completes the proof.
\end{proof}

\section{A fully discrete scheme and numerical  results}\label{sec: numerical results}
In this section, we introduce a fully discrete scheme for problem~\eqref{eq: ibvp}. We use this scheme to illustrate numerically the convergence of the proposed $L1$ time-stepping scheme  and compare them with the  theoretical convergence  proven in Theorem~\ref{thm: time convergence}.

\subsection{A fully discrete scheme}\label{sec: fully22}
 
We discretize the L1 time-stepping scheme~\eqref{fully} in space using the standard Galerkin continuous piecewise linear finite element method (P1 finite elements)  to obtain a fully-discrete solution $U_h$. 
To this end, we introduce a family of regular (conforming) triangulation $\mathcal{T}_h$ of the domain $\overline{\Omega}$ and denote $h=\max_{K\in \mathcal{T}_h}(h_K)$, where $h_{K}$ denotes the diameter of the element $K$. 
Let $V_h \subset H^1_0(\Omega)$ denote the usual space of continuous, piecewise-linear functions on  $\mathcal{T}_h$ that vanish on $\partial \Omega$. Let $\W(V_h)\subset  C([0,T];V_h)$ denote the space of linear polynomials on $[t_{n-1},t_n]$ for $1\le  n\le N$,  with coefficients in $V_h.$

Taking the inner product of~\eqref{fully} with a test function $v \in H^1_0(\Omega)$, and applying the first Green identity, the semi-discrete $L1$ solution $U$ satisfies for $n\ge 1$ ($U(0)=u_0$)
\begin{gather} \label{fully0}
 \iprod{U(t_n)-U(t_{n-1}),v}+ \int_{t_{n-1}}^{t_n}\iprod{[\kappa_\alpha \partial_t^{1-\alpha}\vnabla U- \overline \vF\partial_t^{1-\alpha} U],\vnabla v} \,dt
= \int_{t_{n-1}}^{t_n} \iprod{f,v}\,dt.
 \end{gather}
 
Motivated by~\eqref{fully0}, we define our fully-discrete  solution $U_h\in \W(V_h)$ as: with $U_h^n:=U_h(t_n)$, 
\begin{equation} \label{fully 2 2}
\iprod{U_h^n-U_h^{n-1},v_h}+ \int_{t_{n-1}}^{t_n}\iprod{[\kappa_\alpha \partial_t^{1-\alpha}\vnabla U_h- \overline \vF\partial_t^{1-\alpha} U_h](t),\vnabla v_h}\,dt
= { \int_{t_{n-1}}^{t_n}\iprod{f(t),v_h}\,dt}
\end{equation}
$\forall\, v_h\in V_h$, for $1\le n\le N$. The discrete initial solution $U_h^0\in V_h$ approximates of the initial data $u_0$. One can choose $U_h^0= R_h u_0$,  where $R_h:H^1_0(\Omega) \mapsto V_h$ is the Ritz projection defined by 
$\iprod{\kappa_\alpha \vnabla (R_h w-w),\vnabla v_h}= 0$  for all $v_h \in V_h.$ For non-smooth $u_0$, $U_h^0$ can be defined \textit{via} a simpler $L_2$ projection of $u_0$ over the space $V_h$.  
 
To show the stability of the fully-discrete scheme, we follow line-by-line the proof of Theorem~\ref{thm: stability} and conclude that the stability result of the theorem remains valid for $u_{0h}=U_h^0$ in place of $u_0.$ 
The uniqueness of the fully discrete solution follows immediately from this result. Further,
since the numerical scheme~\eqref{fully 2 2} reduces to a finite square linear system at each time level $t_n$, see~\eqref{eq: matrix form} below, the existence of $U_h^n$ follows from its uniqueness. 

Concerning the error analysis of the fully-discrete scheme, an additional error term of order $O(h^2)$ is expected to appear under certain regularity assumptions on $u$. As mentioned in the introduction, the convergence analysis of spatial discretization \textit{via} Galerkin finite elements was recently studied by few authors, see (Huang et al. (2020), Le et al. (2016) and (2018), McLean and Mustapha (2021)).

To write the fully-discrete scheme~\eqref{fully 2 2} in a matrix form, let $d_h:=\dim V_h$. Then, for $1\le p\le d_h$, let $\phi_p\in V_h$ denote the $p$-th basis function associated with the $p$-th interior node $x_p$, so that $\phi_p(x_q)=\delta_{pq}$ and $U^n_h({x})=\sum_{p=1}^{d_h}U^n_{h,p} \phi_p(x)$. We define $d_h\times d_h$ matrices:  $\matG=\left[\iprod{\kappa_\alpha \vnabla \phi_{q},\vnabla \phi_p}-\iprod{\vF^{n-\frac{1}{2}}  \phi_{q},\vnabla \phi_p}\right],$ $\matM=[\iprod{\phi_q,\phi_p}],$ and the $d_h$-dimensional column vectors ${\bf U}_h^n$  and ${\bf f}^n$ with components~$U^n_{h,p}$ and $\int_{t_{n-1}}^{t_n} \iprod{f(t),\phi_p}\,dt,$ respectively.   Thus, the  scheme~\eqref{fully 2 2} has the following matrix representations: 
\begin{multline*}
  \matM({\bf U}_h^n-{\bf U}_h^{n-1})+ \frac{\tau_n^\alpha}{\Gamma(\alpha+2)}\matG({\bf U}_h^n+\alpha {\bf U}_h^{n-1})
	={\bf f}^n+\omega_{\alpha+1}(\tau_n) \matG {\bf U}_h^{n-1} \\
	-\big(\omega_{\alpha+1}(t_n)-\omega_{\alpha+1}(t_{n-1})\big) \matG {\bf U}_h^0
	 -\matG\sum_{j=1}^{n-1}\frac{\omega_{n,j}^\alpha}{\tau_j}({\bf U}_h^j-{\bf U}_h^{j-1}),~~{\rm for}~~1\le n\le N,
	\end{multline*}
 with
$  \omega_{n,j}^\alpha =\big(\omega_{\alpha+2}(t_n-t_{j-1})-\omega_{\alpha+2}(t_{n-1}-t_{j-1})\big)-\big(\omega_{\alpha+2}(t_n-t_j)-\omega_{\alpha+2}(t_{n-1}-t_j)\big).$
Thus, with $\matS=\matM+ \frac{\tau_n^\alpha}{\Gamma(\alpha+2)}\matG,$  and ${\bf W}_h^n= {\bf U}_h^n-{\bf U}_h^{n-1}$, \eqref{fully 2 2} has the following compact matrix representations: 
\begin{equation}\label{eq: matrix form}
\matS {\bf W}_h^n
	= {\bf f}^n-\big(\omega_{\alpha+1}(t_n)-\omega_{\alpha+1}(t_{n-1})\big) \matG {\bf U}_h^0-\matG\sum_{j=1}^{n-1}
	\frac{\omega_{n,j}^\alpha}{\tau_j}  {\bf W}_h^j,~~{\rm for}~~1\le n\le N\,.
\end{equation}

For one-dimensional problems and the P1 finite element method considered here, the matrices $\matS$ (as well as $\matG$ and $\matS$) have tri-diagonal structures. The resolution of~\eqref{eq: matrix form} then raises no particular issue since, as pointed out before, $\matS$ is non-singular.
In higher-dimension, the matrices will retain the sparse character of the mass $\matM$ and $\matG$ matrices resulting from P1 discretization, enabling the use of efficient direct or iterative solvers.
Note that in the case of zero driving force, $\vF=\mvec{0}$, the matrix $\matS$ is symmetric and positive definite (SPD) such that methods for SPD systems can be employed, \textit{e.g.}, Cholesky decomposition or conjugate gradient methods). In contrast with the excellent features of $\matS$, the system of equations~\eqref{eq: matrix form} requires storing the solution ${\bf U}^j_h$ (or increments ${\bf W}^n_h$) at all previous time steps to evaluate the right-hand-side. This requirement constitutes a significant limitation for problems with $d>1$ and large spatial mesh, an issue that is not specific to our method.

\subsection{Numerical results}\label{sec: numres}

In our  test problem, we consider the fractional model problem~\eqref{eq: ibvp} in one-dimension ($d=1$), with $\Omega=(0,1)$, $F(x,t) =  \sin(t)-x$,  $T = 1$,  and $\kappa_\alpha = 1$. The performance of the considered spatial Galerkin finite element scheme was previously demonstrated theoretically and numerically for both smooth and non-smooth data in (Le et al. (2016) and  (2018), McLean and Mustapha (2021)). In the following, we focus on the numerical illustration of the performance of the $L1$ time-stepping discretization on two typical examples with infinite Fourier spectra, using a sufficiently refined uniform spatial mesh  of size $h=1/2000$ to make the time error dominant. 

{\bf Example 1.} We set the initial data $u_0(x)=x(1-x)$ and choose $f$ so that the exact solution is 
\begin{equation}\label{eq: u series}
    u(x,t)=8\sum_{m=0}^\infty \lambda_m^{-3}\sin( \lambda_m x) E_{\alpha,1}(-\lambda_m^2 t^{\alpha}),\quad \lambda_m:=(2m+1)\pi,
\end{equation}
where $E_{\mu,\beta}(t):=\sum_{p=0}^\infty\frac{t^p}{\Gamma(\mu p+\beta)}$ is the Mittag-Leffler function, with parameters $\mu,\beta >0.$ The source term in that case is 
\[
  f(x,t)=8t^{\alpha-1} \sum_{m=0}^\infty \lambda_m^{-3}
  \Big[\lambda_m\cos( \lambda_m x)(\sin(t) -x)-\sin( \lambda_m x)\Big] E_{\alpha,\alpha}(-\lambda_m^2 t^{\alpha}).
\]

In the following, we measure the error in the numerical solution by computing $\epsilon_N=\max_{1\le n\le N} \|U_h^n-u(t_n)\|,$
where $N$ is the number of time subintervals. The spatial $L_2$-norm is evaluated using the two-point Gauss quadrature rule per element. 
Figure~\ref{fig2} shows the evolution in time of the pointwsie error $\|U_h^n-u(t_n)\|$ for different time-graded mesh.
From this plot, one can appreciate the global decay with the time mesh exponent $\gamma$ of the pointwise errors and the higher error in the neighborhood of $t\approx 0$.
\begin{figure}
\begin{center}
\includegraphics[width=9cm, height=5cm]{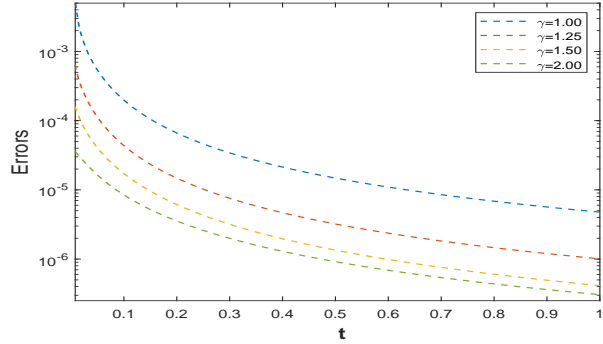}
\caption{Pointwise error $\|U_h^n-u(t_n)\|$ as a function of $t_n$ for $N=128$, $\alpha=0.6$, and different values of  $\gamma\ge 1$.}
\label{fig2}
\end{center}
\end{figure}

The time convergence rate $r_t$ is subsequently calculated from the relation $r_t \approx {\rm log}_2\big(\epsilon_N/\epsilon_{2N}\big).$ It is not difficult to show that the data of the problem satisfy the conditions of the stability and convergence Theorems~\ref{thm: stability} and~\ref{thm: time convergence} for $1/2<\alpha < 1$. 
Furthermore, because of the inequality 
\[
  \Big|\frac{d^j}{dt^j}  E_{\alpha,1}(-\lambda_m^2 t^{\alpha})\Big|\le C(\lambda_m^2 t^{\alpha})^{-\mu} t^{-j}, \quad \mbox{for } j \in \{1,2,3,\cdots\} \mbox{ and }|\mu|\le 1,
\]
the first regularity estimate in~\eqref{eq: regularity2} holds true for $0<\alpha<1$, and the second one is valid for $\sigma=\alpha^-/4$. This result is supported by the regularity analysis presented in (McLean (2010) and McLean et al. (2020)), because $u_0 \in \dot H^{2.5^-}(\Omega)\cap H^1_0(\Omega)$ and the source term $f$ satisfies the inequality $t^j \|f^{(j)}(t)\|_1\le C t^{\frac{3\alpha}{4} -1}$ for $t>0,$ with $j=0,1,2,3$. 
 Therefore, for $1/2<\alpha < 1$, by  Theorem~\ref{thm: time convergence}, 
\[r_t=\begin{cases} \sigma+\alpha/2\approx  3/(4\alpha^-)&\quad{\rm  when}~~ \gamma=1,\\
   2&\quad{\rm when}~~ \gamma> 2/(\sigma+\alpha/2) \approx  8/(3\alpha^-)\,.\end{cases}\] 

 Table~\ref{table 1 FFP} presents $\epsilon_N$ and  $r_t$, for $\alpha=0.7$ and for different   choices of   $\gamma \ge 1$. As expected, $\epsilon_N$ and $r_t$ improve with $\gamma$. The empirical convergence rate is found to be better than expected with $r_t\approx (\sigma+\alpha)\gamma$ for $1 \le \gamma \le  2/(\sigma +\alpha)\approx 2.3.$ Further,  Table~\ref{table 2 FFP} reports similar  convergence rates for  $\alpha =0.5$ (top part) and $\alpha=0.4$ (bottom part), that is,  when $\alpha$ is outside the range covered by the theory.

\begin{table}[hbt]
\begin{center}
\begin{tabular}{|c|cc|cc|cc|}
\hline
$N$&\multicolumn{2}{c|}{$\gamma=1$ }
&\multicolumn{2}{c|}{$\gamma=1.6$}
&\multicolumn{2}{c|}{$\gamma=2.3$}\\
\hline
    16& 2.039e-02&         &   3.787e-03&         &  8.847e-04&         \\
    32& 1.022e-02&   0.9964&   1.417e-03&   1.4187&  2.234e-04&   1.9856\\
    64& 4.976e-03&   0.1038&   5.446e-04&   1.3792&  5.638e-05&   1.9863\\
   128& 2.578e-03&   0.9486&   2.065e-04&   1.3990&  1.421e-05&   1.9880\\
   256& 1.417e-03&   0.8640&   7.816e-05&   1.4018&  3.588e-06&   1.9863\\
    \hline
\end{tabular}
\caption{Errors $\epsilon_N$ and convergence rates $r_t\approx (\sigma+\alpha)\gamma$ for $\alpha=0.7$, and different choices of $1\le \gamma\le 2/(\sigma+\alpha)$.}
\label{table 1 FFP}
\end{center}
\end{table}

\begin{table}[hbt]
\begin{center}
\begin{tabular}{|c|c|c|c|c|}
\hline
\multicolumn{5}{|c|}{$\alpha=0.5$}\\
\hline
$N$ & $\gamma=1$ & $\gamma=1.6$ &$\gamma=2.4$ & $\gamma=3.2$\\
\hline
    32& 2.5e-02~~~         0.458& 6.7e-03~~~   1.082&1.0e-03~~~   1.568&   2.6e-04~~~   1.953\\
    64& 1.7e-02~~~         0.551& 3.0e-03~~~   1.139&3.7e-04~~~   1.489&   6.8e-05~~~   1.954\\
   128& 1.1e-02~~~         0.634& 1.5e-03~~~   1.065&1.3e-04~~~   1.499&   1.7e-05~~~   1.962\\
   256& 6.7e-03~~~         0.693& 7.3e-04~~~   0.999&4.6e-05~~~   1.502&   4.5e-06~~~   1.966\\    
\hline
\hline
\multicolumn{5}{|c|}{$\alpha=0.4$ }\\
\hline
$N$&$\gamma=1$&$\gamma=2$ &$\gamma=3$&$\gamma=4$\\
\hline
 32&   2.9e-02~~~   0.276&   6.4e-03~~~   1.029&   9.7e-04~~~   1.604&   2.8e-04~~~   1.913\\
 64&   2.3e-02~~~   0.331&   2.9e-03~~~   1.126&   3.4e-04~~~   1.496&   7.4e-05~~~   1.921\\
128&   1.8e-02~~~   0.388&   1.4e-03~~~   1.089&   1.2e-04~~~   1.497&   1.9e-05~~~   1.936\\
256&   1.3e-02~~~   0.444&   6.8e-04~~~   1.016&   4.3e-05~~~   1.502&   5.0e-06~~~   1.945\\
    \hline
\end{tabular}
\caption{Errors $\epsilon_N$ and convergence rates $r_t$ for different choices of $1\le \gamma\le 2/(\sigma+\alpha)$. It is observed that $r_t\approx {(\sigma+\alpha)\gamma}$. }
\label{table 2 FFP}
\end{center}
\end{table}

{\bf Example 2.}  The second example considers a solution with lower regularity. We choose  $u_0(x)=x$ on $[0,1/2]$ and $1-x$ on $[1/2,1]$. Thus, $u_0 \in H^{1.5^-}(\Omega)$. The source term $f$ is chosen so that 
 \begin{equation}\label{eq: u series ex2}
u(x,t)=4\sum_{m=0}^\infty (-1)^m\lambda_m^{-2}\sin( \lambda_m x) E_{\alpha,1}(-\lambda_m^2 t^{\alpha}),\quad \lambda_m:=(2m+1)\pi.
\end{equation}
Following similar arguments as of the previous example, we deduce that the first regularity estimate in~\eqref{eq: regularity2} holds for $0<\alpha<1$, while the second one is valid only for $\sigma=-\alpha^+/4<0.$ Thus, the required regularity assumptions of Theorems~\ref{thm: time convergence} are not satisfied. To avoid dealing with negative values of $\sigma,$ we focus on the weighted error $t_n^{\alpha/4}\|U_h^n-u(t_n)\|$ (so $\sigma^*=\sigma+\alpha/4 \approx 0$) and the corresponding convergence rates. We denote  $\epsilon^*_N = \max_{1\le n\le N} \, t_n^{\alpha/4}\|U^n_h-u(t_n)\|,$
and $r_t^*$ the associated rate of convergence.

Table~\ref{table 2 FFP ex2} reports $\epsilon^*_N$  and $r_t^*$ for $\alpha=0.4$, 0.6 and 0.8, and different $\gamma$ in the range $[1,2/\alpha]$. As in the previous example, the empirical results confirmed  $r_t^* \approx \min\{\gamma(\sigma^*+\alpha),2\}\approx \min\{\gamma\alpha,2\}$-rates  of convergence. 
Therefore, at time level $t_n,$ we may conclude that the error $\|U_h^n-u(t_n)\|\le Ct_n^{-\alpha/4}  \tau^{\min\{\gamma \alpha,2\}}$. 
When $\gamma=1$ (uniform time meshes), such as estimate is expected in the limiting case $\alpha \to 1$ (the fractional Fokker-Planck~\eqref{eq: ibvp} reduces to the classical Fokker-Planck  model).

\begin{table}[hbt]
\begin{center}
\begin{tabular}{|c|cc|cc|cc|cc|}
\hline
\multicolumn{5}{|c|}{$\alpha=0.4$ }\\
\hline
$N$&{$\gamma=1$ }
&{$\gamma=2$}
&{$\gamma=3$}&{$\gamma=5$}\\
\hline
   32& 3.4e-02~~~   0.363&   6.75e-03~~~   9.716&  1.7e-03~~~   1.159& 1.8e-04~~~ 1.928\\
   64& 2.5e-02~~~   0.409&   3.72e-03~~~   8.581&  7.3e-04~~~   1.186& 4.7e-05~~~ 1.937\\
  128& 1.8e-02~~~   0.452&   2.18e-03~~~   7.730&  3.2e-04~~~   1.198& 1.2e-05~~~ 1.949\\
  256& 1.3e-02~~~   0.482&   1.27e-03~~~   7.782&  1.4e-04~~~   1.199& 3.1e-06~~~ 1.958\\
    \hline
\multicolumn{5}{|c|}{$\alpha=0.6$ }\\
\hline
$N$&{$\gamma=1$ }
&{$\gamma=2$}
&{$\gamma=2.6$}&{$\gamma=3.3$}\\
\hline
   32& 1.9e-02~~~   0.748&   2.1e-03~~~   1.155&   6.1e-04~~~   1.557&   1.7e-04~~~   1.965\\
   64& 1.1e-02~~~   0.741&   9.3e-04~~~   1.196&   2.1e-04~~~   1.559&   4.3e-05~~~   1.982\\
  128& 7.1e-03~~~   0.654&   4.1e-04~~~   1.199&   7.2e-05~~~   1.559&   1.1e-05~~~   1.986\\
  256& 4.7e-03~~~   0.579&   1.8e-04~~~   1.199&   2.4e-05~~~   1.561&   2.8e-06~~~   1.987\\
  \hline
\multicolumn{5}{|c|}{$\alpha=0.8$ }\\
\hline
$N$&{$\gamma=1$ }
&{$\gamma=1.5$}
&{$\gamma=2$}&{$\gamma=2.5$}\\
\hline
   32&  0.0e-03~~~   0.824&   2.4e-03~~~   1.181&   5.9e-04~~~   1.599&   1.7e-04~~~   2.031\\
   64&  5.4e-03~~~   0.733&   1.0e-03~~~   1.200&   1.9e-04~~~   1.599&   4.2e-05~~~   2.000\\
  128&  3.1e-03~~~   0.780&   4.5e-04~~~   1.199&   6.5e-05~~~   1.599&   1.0e-05~~~   1.997\\
  256&  1.8e-03~~~   0.801&   2.0e-04~~~   1.199&   2.1e-05~~~   1.601&   2.7e-06~~~   1.969\\  
    \hline
  \end{tabular}
\caption{Weighted errors $\epsilon^*_N$ and associated convergence rates for Example 2, with  different choices of 
$\alpha$ and $\gamma\ge1$. The time convergence rate is $\approx {(\sigma^*+\alpha)\gamma}$ for $1\le \gamma\le 2/(\sigma^*+\alpha)$, where in this example $\sigma^*=\sigma+\alpha/4 \approx 0$. }
\label{table 2 FFP ex2}
\end{center}
\end{table}


\begin{thebibliography}{}
\bibitem{Angstmann2015} C. N. Angstmann et al. (2015), Generalised continuous time random walks, master equations and fractional Fokker–Planck equations, SIAM J. Appl. Math., 75, 1445--1468.

\bibitem{BrunnerPedasVainikko1999} H. Brunner, A. Pedas, and  Vainikko (1999), The piecewise polynomial collocation method for weakly singular Volterra integral equations. Math. Comp., 68, 1079--1095.

\bibitem{ChandlerGraham1988} G. A. Chandler and I. G. Graham (1988), Product integration-collocation methods for noncompact integral operator equations, Math. Comp., 50, 125--138.

 
 \bibitem{HuangLeStynes2020}  C. Huang, K.-N. Le, and M. Stynes (2020). A new analysis of a numerical method for the time-fractional Fokker–Planck equation with general forcing, IMA J. Numer. Anal., 40, 1217--1240.


\bibitem{Deng2007} W. Deng (2007), Numerical algorithm for the time fractional Fokker-Planck equation,  J. Comput. Phys., 227,  1510--1522.


\bibitem{DixonMcKee1986} J. Dixon and S. McKee (1986), Weakly singular Gronwall inequalities, ZAMM Z. Angew. Math. Mech., 66, 535--544.

\bibitem{DuongJin2020} M. H. Duong and B. Jin (2020), Wasserstein Gradient Flow Formulation of the time fractional Fokker-Planck equation,  arXiv: 1908.09055v2.

\bibitem{JinLiZhou2018}   B. Jin, B. Li, and Z. Zhou (2018) Numerical analysis of nonlinear subdiffusion  equations. SIAM J. Numer. Anal., 56, 1--23.

\bibitem{JiangXu2019} Y. Jiang and X. Xu (2019), A monotone finite volume method for time fractional Fokker-Planck equations, Sci. China Math., 62, 783--794.


\bibitem{KaraaPani2020}  S. Karaa and A. Pani (2020), Mixed FEM for time-fractional diffusion problems with time-dependent coefficients, J. Sci.  Comput.,  83.

\bibitem{Kopteva2019} N. Kopteva (2019), Error analysis of the $L1$ method on graded and uniform meshes for a fractional-derivative problem in two and three dimensions, Math. Comp., 88, 2135--2155.


\bibitem{LeMcLeanStynes2019}  K.-N. Le, W. McLean, and M. Stynes (2019), Existence, uniqueness and regularity of the solution of the time-fractional Fokker–Planck equation with general forcing,  Comm. Pure. Appl. Anal., 18, 2765--2787. 

\bibitem{LeMcLeanMustapha2016} K. N. Le, W. McLean, and K. Mustapha (2016), Numerical solution of the time-fractional Fokker-Planck equation with general forcing, SIAM J. Numer. Anal., 54, 1763--178.

\bibitem{LeMcLeanMustapha2018} K. N. Le, W. McLean, and K. Mustapha (2018), A semidiscrete finite element approximation of a time-fractional Fokker–Planck equation with nonsmooth initial data,  SIAM J. Sci. Comput.,  40, A3831--3852.

\bibitem{LiaoMcLeanZhang2019} H.-l. Liao, W. McLean, and J. Zhang (2019), A discrete Gronwall inequality with applications to numerical schemes for subdiffusion problems, SIAM J. Numer. Anal., 57, 218--237.
   
\bibitem{McLean2010} W. McLean (2010), Regularity of solutions to a time-fractional diffusion equation, ANZIAM J., 52, 123--138.

\bibitem{McLeanMustapha2007} W. McLean and Mustapha (2007), A second-order accurate numerical method for a fractional wave equation, Numer. Math., 105, 481--510.

\bibitem{McLeanEtAl2019} W. McLean et al. (2019), Well-posedness of time-fractional advection-diffusion-reaction equations, Fract. Calc. Appl. Anal., 22, 918--944.

\bibitem{McLeanEtAl2020} W. McLean et al. (2020), Regularity theory for time-fractional advection–diffusion–reaction equations, Comp. Math. Appl., 79, 947--961.

\bibitem{McLeanMustapha2021} W. McLean and K. Mustapha (2021), Uniform stability for a spatially-discrete, subdiffusive Fokker–Planck equation, Numer. Algo., to appear.

\bibitem{Mustapha2015} K. Mustapha (2015), Time-stepping discontinuous Galerkin methods for fractional diffusion problems.
Numer. Math., 130, 497--516.

\bibitem{Mustapha2020} K. Mustapha (2020), An L1 approximation for a fractional reaction-diffusion equation, a second-order error analysis over time-graded meshes, SIAM J. Numer. Anal., 58,  1319--1338.


\bibitem{Stynes2021} M. Stynes (2021), A survey of the L1 scheme in the discretisation of time-fractional problems, www.researchgate.net/profile/Martin-Stynes/publication/348409672

\bibitem{StynesORiordanGracia2017} M. Stynes, E. O'Riordan, and J. L. Gracia (2017), Error analysis of a finite difference method
on graded meshes for a time-fractional diffusion equation, SIAM J. Numer. Anal., 55, 1057--1079.


\bibitem{SaadatmandiEtAl2012}  A. Saadatmandi, M. Dehghan, and M.-R. Azizi (2012), The sinc–Legendre collocation method for a class of fractional convection-diffusion equations with variable coefficients, Commun. Nonlinear Sci. Numer. Simul., 17,  4125--4136.



\bibitem{WangZhaoChenWeiTang2018}  F. Wang, Y. Zhao, C. Chen, Y. Wei, and Y. Tang (2019), A novel high-order approximate scheme for two-dimensional time-fractional diffusion equations with variable coefficient, Computers \& Mathematics with Applications, 78, 1288--1301.

\bibitem{YanKhanFord2018} Y. Yan, M. Khan, and N. J. Ford (2018), An analysis of the modified $L1$ scheme for time-fractional partial differential equations with nonsmooth data, SIAM J. Numer. Anal.,  56, 210--227.


\bibitem{YangEtAl2018} Y. Yang, Y. Huang, and Y. Zhou (2018), Numerical solutions for solving time fractional Fokker-Planck equations based on spectral collocation methods,  J. Comput. Math., 339, 389--404.

\bibitem{YeGaoDing2007}  H. Ye, J. Gao, and Y. Ding (2007), A generalized Gronwall inequality and its application to a fractional differential equation, J. Math. Anal. Appl., 328, 1075--1081.
\end{thebibliography}
\end{document}